\def\int{\displaystyle\!int}
\def\lim{\displaystyle\!lim}
\def\sum{\displaystyle\!sum}
\def\sup{\displaystyle\!sup}
\def\inf{\displaystyle\!inf}
\def\cap{\displaystyle\!cap}
\def\max{\displaystyle\!max}
\def\min{\displaystyle\!min}
\newtheorem{theorem}{\bf Theorem}[section]
\newtheorem{lemma}{\bf Lemma}[section]
\newtheorem{definition}{\bf Definition}[section]
\newtheorem{proposition}{\bf Proposition}[section]
\newtheorem{corollary}{\bf Corollary}[section]
\newtheorem{remark}{\bf Remark}[section]
\begin{document}
\title{Strong averaging principle for stochastic Klein-Gordon equation
with a fast oscillation
\footnote{This work is supported by NSFC Grant (11601073) and the Fundamental Research Funds for the Central Universities}}
\author{Peng Gao
\\[2mm]
\small School of Mathematics and Statistics, and Center for Mathematics
\\
\small and Interdisciplinary Sciences, Northeast Normal University,
\\
\small Changchun 130024,  P. R. China
\\[2mm]
\small Email: gaopengjilindaxue@126.com }
\date{}
\maketitle

\vbox to -13truemm{}

\begin{abstract}
This paper investigates an averaging principle for stochastic Klein-Gordon equation
with a fast oscillation arising as the solution of a stochastic reaction-diffusion equation evolving with
respect to the fast time. Stochastic averaging principle is a powerful tool for studying qualitative analysis of stochastic dynamical systems
with different time-scales. To be more precise, the well-posedness of mild solutions of the stochastic hyperbolic-parabolic equations is firstly established
by applying the fixed point theorem and the cut-off technique.  Then, under suitable conditions, we prove that there is a limit process in which the fast varying process
is averaged out and the limit process which takes the form of the stochastic Klein-Gordon equation is an average with respect to the stationary measure of the fast varying process.
Finally, by using the Khasminskii technique we can obtain the rate of strong convergence for the slow component towards the solution of the averaged equation.
\\[6pt]
{\sl Keywords: Stochastic averaging principle; Stochastic Klein-Gordon equation; Effective dynamics; slow-fast SPDEs; Strong convergence}
\\
\\
{\sl 2010 Mathematics Subject Classification: 60H15, 70K65, 70K70}
\end{abstract}
\section{Introduction}
\par
The nonlinear Klein-Gordon equation
\begin{equation*}
\begin{array}{l}
\begin{array}{llll}
u_{tt}-u_{xx}+m^{2}u+\mu|u|^{2}u+\nu|u|^{4}u=0,
\end{array}
\end{array}
\begin{array}{lll}
\end{array}
\end{equation*}
appears in the study of several problems of mathematical physics.
For example, this equation arises in general relativity, nonlinear optics (e.g., the instability phenomena such as self-focusing), plasma physics, fluid mechanics, radiation theory or spin waves \cite{D3,F8,L4}.
\par
Stochastic Klein-Gordon equation is a stochastic wave equation, a large amount of work has been devoted to the study of the nonlinear stochastic wave equation:
\par
$\bullet$ Existence and uniqueness of solution: \cite{G5} establishs the existence and uniqueness
of solution for stochastic viscoelastic wave equations.

\par
$\bullet$ Explosive solution:
\cite{C6}, \cite{T1} and \cite{B7} invtisvities the explosive solution of stochastic
wave equation.
\par
$\bullet$ Large-time asymptotic properties of solutions:
Large-time asymptotic properties of solutions to a class of semilinear stochastic
wave equations with damping in a bounded domain are considered in \cite{C7}.
In \cite{L5}, relations between the asymptotic behavior for a stochastic wave equation and a heat equation
are considered.
\par
$\bullet$ Absolute continuity of the law of the solution: In \cite{Q1}, the authors prove some results concerning the existence of the density of the real valued solution of a 3D-stochastic wave equation.

\par
$\bullet$ Invariant measure: The existence and uniqueness of an invariant measure for the transition semigroup associated with a nonlinear stochastic Klein-Gordon type are studied in \cite{B1} and \cite{B5}, in \cite{B5}, the authors consider the stochastic wave equations with nonlinear dissipative damping. In \cite{B6}, the authors show the existence of a unique invariant measure associated with the transition
semigroup under mild conditions.
\par
$\bullet$ The corresponding Kolmogorov operator: In \cite{B1}, the structure of
the corresponding Kolmogorov operator associated with a stochastic Klein-Gordon equation is studied.
\par
$\bullet$ Attractor: In \cite{C5}, the existence of an attractor is proved,
which implies the existence of an invariant measure. However, there is no a large overlap
with the results obtained here and the methods are quite different. \cite{W2} deals with
a class of non-autonomous stochastic linearly damped wave equations on Rd perturbed by multiplicative Stratonovich white noise of the form.

\par $\bullet$ Smoluchowski-Kramers approximation problem: The Smoluchowski-Kramers approximation problem for the nonlinear stochastic wave equation has been consider in \cite{C8,C9,C10,C11,C12,C13}.

\par $\bullet$ Large deviation principle. In \cite{L6}, by using a weak convergence method, a large deviation principle is built for the singularly perturbed stochastic nonlinear damped wave equations on bounded regular domains.
\par
$\cdots\cdots$
\par
In this paper, we will be concerned with the averaging principle for stochastic Klein-Gordon
equation with a fast oscillating perturbation
\begin{equation*}
\begin{array}{l}
\left\{
\begin{array}{llll}
dA^{\varepsilon}_{t}+[-A^{\varepsilon}_{xx}+\mu|A^{\varepsilon}|^{2}A^{\varepsilon}+\nu|A^{\varepsilon}|^{4}A^{\varepsilon}+f(A^{\varepsilon}(t), B(\frac{t}{\varepsilon}))]dt=\sigma_{1}dW_{1}

\\A^{\varepsilon}(0,t)=0=A^{\varepsilon}(1,t)

\\A^{\varepsilon}(x,0)=A_{0}(x)
\\A^{\varepsilon}_{t}(x,0)=A_{1}(x)

\end{array}
\right.
\end{array}
\begin{array}{lll}
{\rm{in}}~Q\\

{\rm{in}}~(0,T)\\

{\rm{in}}~I\\

{\rm{in}}~I,
\end{array}
\end{equation*}
where $B(t)$ is governed by the stochastic reaction-diffusion equation
\begin{equation*}
\begin{array}{l}
\left\{
\begin{array}{llll}
dB+[-B_{xx}+|B|^{2}B+g(A, B)]dt=\sigma_{2}dW_{2}
\\B(0,t)=0=B(1,t)
\\B(x,0)=B_{0}(x)
\end{array}
\right.
\end{array}
\begin{array}{lll}
{\rm{in}}~Q\\
{\rm{in}}~(0,T)\\
{\rm{in}}~I,
\end{array}
\end{equation*}
where $T>0, I=(0,1), Q=I\times (0,T),$ the stochastic perturbations are of additive type, $W_{1}$ and $W_{2}$ are
mutually independent Wiener processes on a complete stochastic basis $(\Omega,\mathcal{F},\mathcal{F}_{t},\mathbb{P})$, which
will be specified later, denote by $\mathbb{E}$ the expectation with respect to $\mathbb{P}$. The coefficients $\mu$ and $\nu$ are positive constants, the noise coefficients $\sigma_{1}$ and $\sigma_{2}$ are positive constants.
\par
Thus, we will be concerned with the averaging principle for multiscale stochastic Klein-Gordon
equation with slow and fast time-scales
\begin{equation*}
\begin{array}{l}
(\ast)\left\{
\begin{array}{llll}
dA^{\varepsilon}_{t}+[-A^{\varepsilon}_{xx}+\mu|A^{\varepsilon}|^{2}A^{\varepsilon}+\nu|A^{\varepsilon}|^{4}A^{\varepsilon}+f(A^{\varepsilon}, B^{\varepsilon})]dt=\sigma_{1}dW_{1}
\\dB^{\varepsilon}+\frac{1}{\varepsilon}[-B^{\varepsilon}_{xx}+|B^{\varepsilon}|^{2}B^{\varepsilon}+g(A^{\varepsilon}, B^{\varepsilon})]dt=\frac{1}{\sqrt{\varepsilon}}\sigma_{2}dW_{2}
\\A^{\varepsilon}(0,t)=0=A^{\varepsilon}(1,t)
\\B^{\varepsilon}(0,t)=0=B^{\varepsilon}(1,t)
\\A^{\varepsilon}(x,0)=A_{0}(x)
\\A^{\varepsilon}_{t}(x,0)=A_{1}(x)
\\B^{\varepsilon}(x,0)=B_{0}(x)
\end{array}
\right.
\end{array}
\begin{array}{lll}
{\rm{in}}~Q\\
{\rm{in}}~Q\\
{\rm{in}}~(0,T)\\
{\rm{in}}~(0,T)\\
{\rm{in}}~I\\
{\rm{in}}~I\\
{\rm{in}}~I,
\end{array}
\end{equation*}
where the parameter
$\varepsilon$ is small and positive, which describes the ratio of time scale between the process $A^{\varepsilon}$ and
$B^{\varepsilon}$. With this time scale the variable $A^{\varepsilon}$ is referred as slow component and $B^{\varepsilon}$ as the
fast component.

\par
The theory of stochastic averaging principle provides an effective approach for the
qualitative analysis of stochastic systems with different time-scales and is relatively
mature for stochastic dynamical systems.
The theory of averaging principle serves as a tool in study of the qualitative behaviors for complex systems with multiscales, it is essential for describing and understanding the asymptotic behavior of dynamical systems with fast and slow variables. Its basic idea is to approximate the original
system by a reduced system.
The averaging principle is an important method to extract effective macroscopic dynamic from complex systems with slow component and fast component.The theory of averaging for deterministic dynamical systems, which
was first studied by Bogoliubov \cite{B0}, has a long and rich history.

\par
The averaging principle in the stochastic ordinary
differential equations setup was first considered by Khasminskii \cite{K1} which proved that
an averaging principle holds in weak sense, and has been an active research field on which there is a great deal of literature.
Recently, the averaging principle for stochastic differential equations has been paid
much attention \cite{F5,F6,G2,G3,L2}.
\par
However, there are few results on the averaging principle for stochastic systems in infinite dimensional space. To this
purpose we recall the recent results:
\par~~
\par
$\bullet$ parabolic-parabolic system: Cerrai and Freidlin \cite{C1}, Cerrai \cite{C2,C3}, Br\'{e}hier \cite{B2}, Wang and Roberts \cite{W1}, Fu and co-workers \cite{F1,F2,F4}, Xu and co-workers \cite{X1,X2}, Bao and co-workers \cite{B4};
\par~~
\par
$\bullet$ hyperbolic-parabolic system: Fu and co-workers \cite{F1,F7}, Pei and co-workers \cite{P3};
\par~~
\par
$\bullet$ Burgers-parabolic system: Dong and co-workers \cite{D2};
\par~~
\par
$\bullet$ FitzHugh-Nagumo system: Fu and co-workers \cite{F3}, Xu and co-workers \cite{X1}.
\par~~

\par
However, as far as we know there are
no results on the averaging principle for the stochastic Klein-Gordon equations with a fast oscillation $(\ast)$, a natural question is as follows:
\par
~~
\par \textit{Can we establish the averaging principle for the stochastic Klein-Gordon equations with a fast oscillation $(\ast)$ ? To be more precise, can the slow component $A^{\varepsilon}$  be approximated by the solution $\bar{A}$
which governed by a stochastic Klein-Gordon equation?
}
\par
~~
\par
These mathematical questions arise naturally which are important
from the point of view of dynamical systems from both physical and mathematical standpoints. In this paper, the main object is to establish an
effective approximation for slow process $A^{\varepsilon}$ with respect to the limit $\varepsilon\rightarrow0$.
\par
In this paper, we will take
\begin{equation*}
\begin{array}{l}
\begin{array}{llll}
\mu=\nu=1
\end{array}
\end{array}
\end{equation*}
for the sake of simplicity. All the results
can be extended without difficulty to the general case.
\par
We define
\begin{equation*}
\begin{array}{l}
\begin{array}{llll}
\mathcal{L}(u)=u_{xx},
\\
\mathcal{F}(u)=-|u|^{2}u-|u|^{4}u,
\\
\mathcal{G}(u)=-|u|^{2}u,
\end{array}
\end{array}
\end{equation*}
then the stochastic Klein-Gordon
equation $(\ast)$ becomes
\begin{equation}\label{1}
\begin{array}{l}
\left\{
\begin{array}{llll}
dA^{\varepsilon}_{t}=[\mathcal{L}(A^{\varepsilon})+\mathcal{F}(A^{\varepsilon})+f(A^{\varepsilon}, B^{\varepsilon})]dt+\sigma_{1}dW_{1}
\\dB^{\varepsilon}=\frac{1}{\varepsilon}[\mathcal{L}(B^{\varepsilon})+\mathcal{G}(B^{\varepsilon})+g(A^{\varepsilon}, B^{\varepsilon})]dt+\frac{1}{\sqrt{\varepsilon}}\sigma_{2}dW_{2}
\\A^{\varepsilon}(0,t)=0=A^{\varepsilon}(1,t)
\\B^{\varepsilon}(0,t)=0=B^{\varepsilon}(1,t)
\\A^{\varepsilon}(x,0)=A_{0}(x)
\\A^{\varepsilon}_{t}(x,0)=A_{1}(x)
\\B^{\varepsilon}(x,0)=B_{0}(x)
\end{array}
\right.
\end{array}
\begin{array}{lll}
{\rm{in}}~Q,\\
{\rm{in}}~Q,\\
{\rm{in}}~(0,T),\\
{\rm{in}}~(0,T),\\
{\rm{in}}~I,\\
{\rm{in}}~I,\\
{\rm{in}}~I.
\end{array}
\end{equation}

\par
Multiscale stochastic partial differential equations arise as models for various complex
systems, such model arises from describing multiscale phenomena in, for example, nonlinear oscillations,
material sciences, automatic control, fluids dynamics, chemical kinetics and in other areas leading to mathematical description
involving ``slow'' and ``fast'' phase variables. The study of the asymptotic
behavior of such systems is of great interest. In this respect, the question of how the physical effects at large time scales
influence the dynamics of the system is arisen. We focus on this question and show
that, under some dissipative conditions on fast variable equation, the complexities effects at
large time scales to the asymptotic behavior of the slow component can be omitted or neglected
in some sense.
\subsection{Mathematical setting}
\par We introduce the following mathematical setting:

\par
$\diamond$ We denote by $L^{2}(I)$ the space of all Lebesgue square integrable functions on $I$. The inner product on $L^{2}(I)$ is
\begin{eqnarray*}
( u,v)=\int_{I}uvdx,
\end{eqnarray*}
for any $u,v\in L^{2}(I).$ The norm on $L^{2}(I)$ is
\begin{eqnarray*}
\|u\|=( u,u )^{\frac{1}{2}},
\end{eqnarray*}
for any $u\in L^{2}(I).$
\par
$H^{s}(I)(s\geq 0)$ are the classical Sobolev
spaces of functions on $I$. The definition of $H^{s}(I)$ can be found in \cite{L1}, the norm on $H^{s}(I)$ is
$\|\cdot\|_{H^{s}}.$
\par
We set
\begin{eqnarray*}
\begin{array}{l}
\begin{array}{llll}
X_{p,\tau}=L^{p}(\Omega;C([0,\tau];H^{1}(I))\times L^{p}(\Omega;C([0,\tau];L^{2}(I))\times L^{p}(\Omega;C([0,\tau];H^{1}(I)),\\
Y_{\tau}=C([0,\tau];H^{1}(I)\times C([0,\tau];L^{2}(I)\times C([0,\tau];H^{1}(I),
\end{array}
\end{array}
\end{eqnarray*}
where $p\geq1,\tau\geq0.$

\par
$\diamond$ For $i=1,2,$ let $\{e_{i,k}\}_{k\in \mathbb{N}}$ be eigenvectors of a nonnegative, symmetric operator $Q_{i}$ with corresponding eigenvalues $\{\lambda_{i,k}\}_{k\in \mathbb{N}}$, such that
\begin{eqnarray*}
\begin{array}{l}
\begin{array}{llll}
Q_{i}e_{i,k}=\alpha_{i,k}e_{i,k},~~\lambda_{i,k}>0,~k\in \mathbb{N}.
\end{array}
\end{array}
\end{eqnarray*}
Let $W_{i}$ be an $L^{2}(I)-$valued $Q_{i}$-Wiener process with operator $Q_{i}$ satisfying
\begin{eqnarray*}
\begin{array}{l}
\begin{array}{llll}
TrQ_{i}=\sum\limits_{k=1}^{+\infty}\alpha_{i,k}<+\infty,~~k\in \mathbb{N}
\end{array}
\end{array}
\end{eqnarray*}
and
\begin{eqnarray*}
\begin{array}{l}
\begin{array}{llll}
W_{i}=\sum\limits_{k=1}^{+\infty}\alpha_{i,k}^{\frac{1}{2}}\beta_{i,k}(t)e_{i,k}<+\infty,~~k\in \mathbb{N}~~t\geq0,
\end{array}
\end{array}
\end{eqnarray*}
where $\{\beta_{i,k}\}_{k\in \mathbb{N}}(i=1,2)$ are independent real-valued Brownian motions on the probability base $(\Omega,\mathcal{F},\mathcal{F}_{t},\mathbb{P})$.
\par
We denote $\|\sigma_{i}\|_{Q_{i}}^{2}\triangleq\sigma_{i}^{2}TrQ_{i}.$
\par
$\diamond$ The functions $f$ and $g$ satisfy the global Lipschitz condition and the sublinear growth condition, specifically, there exist positive
constants $L_{f}$ and $L_{g}$ such that
\begin{eqnarray*}
\begin{array}{l}
\begin{array}{llll}
\|f(u_{1},v_{1})-f(u_{2},v_{2})\|\leq L_{f}(\|u_{1}-u_{2}\|+\|v_{1}-v_{2}\|),
\\
\|g(u_{1},v_{1})-g(u_{2},v_{2})\|\leq L_{g}(\|u_{1}-u_{2}\|+\|v_{1}-v_{2}\|)
\end{array}
\end{array}
\end{eqnarray*}
for all $u_{1},u_{2},v_{1},v_{2}\in L^{2}(I).$
\par
$\diamond$ Throughout the paper, the letter $C$ denotes positive constants
whose value may change in different occasions. We will write the dependence
of constant on parameters explicitly if it is essential.

\par We adopt the following hypothesis (H) throughout this paper:
\par
(H) $\alpha\triangleq\lambda-L_{g}>0,$ where $\lambda>0$ is the smallest constant such that the following inequality holds
\begin{eqnarray*}
\begin{array}{l}
\begin{array}{llll}
\|u_{x}\|^{2}\geq\lambda\|u\|^{2},
\end{array}
\end{array}
\end{eqnarray*}
where $u\in H_{0}^{1}(I)$ or $\int_{I}udx=0.$

\subsection{Main results}

\par
Asymptotical methods play an important role in investigating
nonlinear dynamical systems. In particular, the averaging
methods provide a powerful tool for simplifying dynamical systems,
and obtain approximate solutions to differential equations
arising from mechanics, mathematics, physics, control and
other areas. In this paper, we use stochastic averaging principle to investigate
stochastic Klein-Gordon equation (\ref{1}).
\par
Now, we are in a position to present the main result in this paper.
\begin{theorem}\label{Th1}
Suppose that the hypothesis (H) holds and $A_{0},B_{0}\in H^{1}_{0}(I),A_{1}\in L^{2}(I),$ $(A^{\varepsilon},B^{\varepsilon})$ is the solution of (\ref{1}) and $\bar{A}$ is the solution of the effective dynamics equation
\begin{equation}\label{8}
\begin{array}{l}
\left\{
\begin{array}{llll}
d\bar{A}_{t}=[\mathcal{L}(\bar{A})+\mathcal{F}(\bar{A})+\bar{f}(\bar{A})]dt+\sigma_{1}dW_{1}
\\\bar{A}(0,t)=0=\bar{A}(1,t)
\\\bar{A}(x,0)=A_{0}(x)
\\\bar{A}_{t}(x,0)=A_{1}(x)
\end{array}
\right.
\end{array}
\begin{array}{lll}
{\rm{in}}~Q\\
{\rm{in}}~(0,T)\\
{\rm{in}}~I,\\
{\rm{in}}~I,
\end{array}
\end{equation}
then we have for any $T>0,$ any $p>0,$
\begin{eqnarray*}
\begin{array}{l}
\begin{array}{llll}
\lim\limits_{\varepsilon\rightarrow 0}(\mathbb{E}\sup\limits_{0\leq t\leq T}\|A^{\varepsilon}(t)-\bar{A}(t)\| ^{2p}+\mathbb{E}\sup\limits_{0\leq t\leq T}\|A^{\varepsilon}_{t}(t)-\bar{A}_{t}(t)\| ^{2p})=0,
\end{array}
\end{array}
\end{eqnarray*}
where
\begin{eqnarray*}
\begin{array}{l}
\begin{array}{llll}
\bar{f}(A)=\int_{L^{2}(I)}f(A,B)\mu^{A}(dB)
\end{array}
\end{array}
\end{eqnarray*}
and $\mu^{A}$ is an invariant measure for the fast motion with frozen slow component
\begin{equation}\label{3}
\begin{array}{l}
\left\{
\begin{array}{llll}
dB=[\mathcal{L}(B)+\mathcal{G}(B)+g(A,B)]dt+\sigma_{2}dW_{2}
\\B(0,t)=0=B(1,t)
\\B(x,0)=B_{0}(x)
\end{array}
\right.
\end{array}
\begin{array}{lll}
{\rm{in}}~Q\\
{\rm{in}}~(0,T)\\
{\rm{in}}~I,
\end{array}
\end{equation}
where $A\in L^{2}(I).$
\par
\par
Moreover, if $p>\frac{5}{8},$ there exists a positive constant $C(p)$ such that
\begin{equation*}
\begin{array}{l}
\begin{array}{llll}
\mathbb{E}\sup\limits_{0\leq t\leq T}\|A^{\varepsilon}(t)-\bar{A}(t)\| ^{2p}+\mathbb{E}\sup\limits_{0\leq t\leq T}\|A^{\varepsilon}_{t}(t)-\bar{A}_{t}(t)\| ^{2p}
\leq C(p)(\frac{1}{-\ln \varepsilon})^{\frac{1}{8p}};
\end{array}
\end{array}
\end{equation*}
if $0<p\leq\frac{5}{8},$ for any $\kappa>0,$ there exists a positive constant $C(p,\kappa)$ such that
\begin{eqnarray*}
\begin{array}{l}
\begin{array}{llll}
\mathbb{E}\sup\limits_{0\leq t\leq T}\|A^{\varepsilon}(t)-\bar{A}(t)\| ^{2p}+\mathbb{E}\sup\limits_{0\leq t\leq T}\|A^{\varepsilon}_{t}(t)-\bar{A}_{t}(t)\| ^{2p}
\leq C(p,\kappa)(\frac{1}{-\ln \varepsilon})^{\frac{8p}{(5+4\kappa)^{2}}}

.
\end{array}
\end{array}
\end{eqnarray*}
\end{theorem}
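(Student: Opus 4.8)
The plan is to implement Khasminskii's time-discretization (``averaging over moving windows'') in the hyperbolic energy space $H^1_0(I)\times L^2(I)$ attached to the slow Klein-Gordon component. I would first collect the uniform-in-$\varepsilon$ moment bounds supplied by the well-posedness result: a constant $C=C(T,p)$ with
\[
\mathbb{E}\sup_{0\le t\le T}\big(\|A^{\varepsilon}(t)\|_{H^1}^{2p}+\|A^{\varepsilon}_t(t)\|^{2p}\big)\le C,
\]
together with a time-uniform bound $\sup_{t\ge0}\mathbb{E}\|B^{\varepsilon}(t)\|^{2p}\le C$ for the fast component. The latter is exactly where hypothesis (H), $\alpha=\lambda-L_g>0$, is used: it makes $\mathcal{L}+\mathcal{G}+g(A,\cdot)$ strictly dissipative, so the energy balance for the fast equation closes uniformly in time despite the $1/\varepsilon$ speed-up and the $1/\sqrt{\varepsilon}$ noise. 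Because $I$ is one-dimensional, the embedding $H^1(I)\hookrightarrow L^\infty(I)$ lets me treat $\mathcal{F}(u)=-|u|^2u-|u|^4u$ as locally Lipschitz on bounded sets of $H^1$, with Lipschitz constant governed by the $L^\infty$-norms of the arguments.

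Next I would develop the ergodic ingredients for the frozen fast equation (\ref{3}). Under (H) two solutions of (\ref{3}) with the same frozen $A$ contract at rate $e^{-\alpha t}$ in mean square (the cubic term $\mathcal{G}$ is monotone and $\mathcal{L}$ supplies the spectral gap $\lambda$), so (\ref{3}) has a unique invariant measure $\mu^{A}$, and the averaged coefficient $\bar f(A)=\int_{L^2(I)}f(A,B)\,\mu^{A}(dB)$ inherits a global Lipschitz bound from $L_f$, $L_g$ and $\alpha$. Exponential mixing yields the basic frozen estimate for the process $B^{A}_\cdot$,
\[
\Big\|\frac{1}{S}\,\mathbb{E}\!\int_0^{S}f\big(A,B^{A}_r\big)\,dr-\bar f(A)\Big\|\le \frac{C}{S}\big(1+\|A\|\big),
\]
turning a sufficiently long time average of $f$ into $\bar f$.

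I would then partition $[0,T]$ into windows of length $\delta=\delta(\varepsilon)$ with $\delta\to0$ and $\delta/\varepsilon\to\infty$, and introduce the auxiliary fast process $\hat B^{\varepsilon}$ obtained by freezing the slow variable at the left endpoint of each window. The Lipschitz-in-time continuity of $A^{\varepsilon}$ (measured against the fast clock $t/\varepsilon$) gives $\mathbb{E}\sup_t\|B^{\varepsilon}(t)-\hat B^{\varepsilon}(t)\|^2\le C\delta$; combining this with the rescaling $r=t/\varepsilon$, the frozen estimate above, and the Lipschitz bound on $\bar f$ to absorb the freezing of $A^{\varepsilon}$, I obtain the averaging bound
\[
\Lambda_\varepsilon:=\mathbb{E}\sup_{0\le t\le T}\Big\|\int_0^t\big[f(A^{\varepsilon}_s,B^{\varepsilon}_s)-\bar f(A^{\varepsilon}_s)\big]\,ds\Big\|^2\le C\Big(\delta+\frac{\varepsilon}{\delta}\Big).
\]
Writing the equation for the difference $R=A^{\varepsilon}-\bar A$ as a forced Klein-Gordon equation and testing with $R_t$, the conservative wave energy $\|R\|_{H^1}^2+\|R_t\|^2$ obeys an identity whose forcing splits into the Lipschitz differences $\mathcal{F}(A^{\varepsilon})-\mathcal{F}(\bar A)$ and $\bar f(A^{\varepsilon})-\bar f(\bar A)$, which feed Gronwall's inequality, plus the genuinely averaging term, which is exactly $\Lambda_\varepsilon$.

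The delicate point, and the source of the logarithmic rate, is that the slow equation is a genuine wave equation: it has neither smoothing nor dissipation, so the only smallness available is $\Lambda_\varepsilon$, while $|u|^4u$ is merely locally Lipschitz. To close the comparison I would run it through a cut-off at level $R$, on which $\mathcal{F}$ is Lipschitz with constant of order $R^4$; Gronwall then produces a prefactor of order $e^{cR^4T}$ multiplying the averaging error, which after optimizing the window $\delta\sim\sqrt\varepsilon$ is a positive power of $\varepsilon$, while the probability of exceeding the cut-off is controlled by $R^{-q}$ via Chebyshev and the uniform moment bounds. Preventing $e^{cR^4T}$ from destroying the power of $\varepsilon$ forces $R^4\lesssim-\ln\varepsilon$, i.e.\ $R\sim(-\ln\varepsilon)^{1/4}$, and the residual $R^{-q}\sim(-\ln\varepsilon)^{-q/4}$ then decays only logarithmically; this is precisely why the mean-square rate is a power of $(-\ln\varepsilon)^{-1}$ rather than of $\varepsilon$. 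Passing from the second moment to the $2p$-th moment is then bookkeeping: H\"older's inequality gives the exponent $1/(8p)$ when $p>5/8$, whereas for $0<p\le5/8$ the moment order must be traded against the cut-off level through a Young-type inequality with a free parameter $\kappa$, producing the exponent $8p/(5+4\kappa)^2$. The main obstacle throughout is to carry every estimate in the hyperbolic norm $\|\cdot\|_{H^1}+\|\cdot\|$ while taming the supercritical-looking quintic term via the one-dimensional embedding and keeping the $R$-dependence of all constants explicit enough to extract the sharp logarithmic rate.
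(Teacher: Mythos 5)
Your proposal follows essentially the same route as the paper: uniform moment bounds, exponential ergodicity of the frozen fast equation under (H), Khasminskii's time discretization with the auxiliary frozen process, the averaging error of order $\delta+\varepsilon/\delta$, a Gronwall argument on the wave energy with a cut-off/stopping time at level $R\sim(-\ln\varepsilon)^{1/4}$ to tame the locally Lipschitz quintic term, Chebyshev on the complement event, and the optimization $\delta\sim\sqrt{\varepsilon}$ yielding the logarithmic rate, with the same H\"older interpolation splitting the cases $p>\tfrac{5}{8}$ and $p\le\tfrac{5}{8}$. The only (cosmetic) difference is that you compare $A^{\varepsilon}$ with $\bar A$ directly via an energy identity for the difference, whereas the paper routes the comparison through an intermediate auxiliary slow process $\hat A^{\varepsilon}$ in the mild (Green's operator) formulation; both lead to the same Gronwall structure and the same exponents.
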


\par
This paper is organized as follows.
In Sec. 2, we present
some preliminary results and an exponential ergodicity of a fast motion equation (\ref{3}) with the frozen slow
component. In Sec. 3, we establish the well-posedness and a priori estimate for the slow-fast system (\ref{1}) and averaged equation (\ref{8}).
In Sec. 4, we derive the stochastic averaging principle in sense of strong convergence for (\ref{1}) by using the Khasminskii technique.
\section{Preliminary results}

\subsection{Green¡¯s function for wave equation}
For the deterministic
wave equation
\begin{eqnarray*}
\begin{array}{l}
\begin{array}{llll}
u_{tt}-u_{xx}=0,
\end{array}
\end{array}
\end{eqnarray*}
its \textbf{Green¡¯s function} is given by
\begin{eqnarray*}
\begin{array}{l}
\begin{array}{llll}
K(t,\xi,\zeta)=\sum\limits_{k=1}^{\infty}\frac{\sin(\sqrt{\alpha_{k}t})}{\sqrt{\alpha_{k}}}e_{k}(\xi)e_{k}(\zeta).
\end{array}
\end{array}
\end{eqnarray*}
It is easy to shown that the above series converge in $L^{2}(I\times I)$ and the associated
\textbf{Green¡¯s operator} is defined by, for any $h(\xi)\in L^{2}(I),$
\begin{eqnarray*}
\begin{array}{l}
\begin{array}{llll}
G(t)h(\xi)=\int_{I}K(t,\xi,\zeta)h(\zeta)d\zeta=\sum\limits_{k=1}^{\infty}\frac{\sin(\sqrt{\alpha_{k}}t)}{\sqrt{\alpha_{k}}}e_{k}(\xi)(e_{k},h).
\end{array}
\end{array}
\end{eqnarray*}
\par
For Green operator $G(t)$, it is easy to derive the following results:
\begin{lemma}\label{L9}\cite[P133, Lemma 3.1, Lemma 3.2]{C4} Green operator $G(t)$ satisfies
\par
1) Let $k$ and $m$ be nonnegative integers. Then, for any function
$h\in H^{k+m-1}$, the following estimates hold:
\begin{eqnarray*}
\begin{array}{l}
\begin{array}{llll}
\sup_{0\leq t\leq T}\|G^{(k)}(t)h\|_{H^{m}}^{2}\leq \|h\|_{H^{k+m-1}}^{2},
~~
{\rm{for}}~~0\leq k+m\leq2.
\end{array}
\end{array}
\end{eqnarray*}
\par
2) Let $f(\cdot,t)\in L^{2}(\Omega\times(0,T);L^{2}(I))$ satisfy
\begin{eqnarray*}
\begin{array}{l}
\begin{array}{llll}
\mathbb{E}\int_{0}^{T}\|f(\cdot,t)\|^{2}dt<\infty.
\end{array}
\end{array}
\end{eqnarray*}
Then
\begin{eqnarray*}
\begin{array}{l}
\begin{array}{llll}
\int_{0}^{t}G(t-s)f(\cdot,s)ds
\end{array}
\end{array}
\end{eqnarray*} is a continuous, adapted $H^{1}$-valued process and its time
derivative is a continuous $L^{2}(I)$-valued process such that
\begin{eqnarray*}
\begin{array}{l}
\begin{array}{llll}
\mathbb{E}\sup_{0\leq t\leq T}\|\int_{0}^{t}G(t-s)f(\cdot,s)ds\|_{H^{k}}^{2}\leq C_{k}T\mathbb{E}\int_{0}^{T}\|f(\cdot,s)\|_{H^{k-1}}^{2}ds,
~~
{\rm{for}}~~k=0,1,

\end{array}
\end{array}
\end{eqnarray*}
and
\begin{eqnarray*}
\begin{array}{l}
\begin{array}{llll}
\mathbb{E}\sup_{0\leq t\leq T}\|(\int_{0}^{t}G(t-s)f(\cdot,s)ds)^{\prime}\|^{2}\leq T\mathbb{E}\int_{0}^{T}\|f(\cdot,s)\|^{2}ds,
~~
{\rm{for}}~~k=0,1,

\end{array}
\end{array}
\end{eqnarray*}
\end{lemma}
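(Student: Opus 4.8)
The plan is to argue entirely through the spectral expansion in the Dirichlet eigenbasis $\{e_k\}$ of $-\partial_{xx}$, using the characterization $\|h\|_{H^s}^2=\sum_{j}\alpha_j^{s}(e_j,h)^2$ of the Sobolev norms. Writing $h_j=(e_j,h)$, the defining series reads $G(t)h=\sum_j \alpha_j^{-1/2}\sin(\sqrt{\alpha_j}\,t)\,h_j e_j$, and since this series may be differentiated termwise in $t$, each time derivative multiplies the $j$-th mode by a factor $\sqrt{\alpha_j}$ together with a bounded trigonometric function. Concretely, $G^{(k)}(t)h$ has $j$-th Fourier coefficient of the form $\alpha_j^{(k-1)/2}\,\theta_{j,k}(t)\,h_j$ with $|\theta_{j,k}(t)|\le 1$ for every $t$ and $j$.

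For part 1) I would simply compute
\[
\|G^{(k)}(t)h\|_{H^m}^2=\sum_j \alpha_j^{m}\,\big|\alpha_j^{(k-1)/2}\theta_{j,k}(t)\big|^2 h_j^2
=\sum_j \alpha_j^{m+k-1}\theta_{j,k}(t)^2 h_j^2
\le\sum_j \alpha_j^{m+k-1}h_j^2=\|h\|_{H^{m+k-1}}^2 ,
\]
where the inequality uses $|\theta_{j,k}(t)|\le 1$. Since the right-hand side is independent of $t$, taking the supremum over $0\le t\le T$ preserves it and yields the stated bound with constant $1$. The range $0\le k+m\le 2$ merely guarantees that the source index $m+k-1\in[-1,1]$ stays admissible.

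For part 2) I would recognise $\Phi(t):=\int_0^t G(t-s)f(\cdot,s)\,ds$ as the Duhamel (mild) solution of $\Phi_{tt}-\Phi_{xx}=f$ with zero initial data, whose $j$-th mode is $\int_0^t \alpha_j^{-1/2}\sin(\sqrt{\alpha_j}(t-s))f_j(s)\,ds$ with $f_j(s)=(e_j,f(\cdot,s))$. Applying the Cauchy--Schwarz inequality in $s$ to each mode,
\[
\Big|\int_0^t \alpha_j^{-1/2}\sin(\sqrt{\alpha_j}(t-s))f_j(s)\,ds\Big|^2
\le \frac{t}{\alpha_j}\int_0^t f_j(s)^2\,ds ,
\]
so that $\|\Phi(t)\|_{H^k}^2\le t\int_0^t\sum_j\alpha_j^{k-1}f_j(s)^2\,ds=t\int_0^t\|f(\cdot,s)\|_{H^{k-1}}^2\,ds$. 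This bound is monotone in $t$, hence $\sup_{0\le t\le T}\|\Phi(t)\|_{H^k}^2\le T\int_0^T\|f(\cdot,s)\|_{H^{k-1}}^2\,ds$, and taking expectations delivers the first estimate with $C_k=1$. For the time derivative I would exploit that $G(0)=0$, so differentiating the convolution produces no boundary term and $\Phi'(t)=\int_0^t G'(t-s)f(\cdot,s)\,ds$, whose $j$-th mode is $\int_0^t\cos(\sqrt{\alpha_j}(t-s))f_j(s)\,ds$; the identical Cauchy--Schwarz step, now without the factor $\alpha_j^{-1}$, gives $\|\Phi'(t)\|^2\le t\int_0^t\|f(\cdot,s)\|^2\,ds$ and hence the claimed inequality.

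The main technical obstacle is not the estimates themselves---each reduces to one application of Cauchy--Schwarz once the trigonometric factors are bounded by $1$---but rather the rigorous justification of the termwise differentiation and of the interchange of the infinite sum with the time integral and the expectation. I would dispatch these by first proving the bounds for the finite truncations $\sum_{j\le N}$, which are unconditionally legitimate, and then passing to the limit $N\to\infty$ using the finiteness of $\mathbb{E}\int_0^T\|f\|^2\,ds$ via dominated convergence and Fubini's theorem; the uniform-in-$t$ tail control thereby obtained simultaneously yields the continuity of $t\mapsto\Phi(t)$ in $H^1$ and of $t\mapsto\Phi'(t)$ in $L^2$, while adaptedness of $\Phi$ is inherited from that of $f$ because $G(t-s)$ acts deterministically.
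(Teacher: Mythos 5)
Your proof is correct: the eigenfunction expansion of $G(t)$, the bound $|\sin|,|\cos|\leq 1$, Parseval for the spectral $H^{s}$-norms in part 1), and Cauchy--Schwarz in $s$ applied mode-by-mode in part 2) (together with $G(0)=0$ for the time derivative, and truncation to justify the continuity/adaptedness claims) give exactly the stated estimates with constant $1$. Note that the paper itself offers no proof of this lemma --- it is quoted directly from Chow \cite[P133, Lemma 3.1, Lemma 3.2]{C4} --- and your argument is precisely the standard spectral proof underlying that reference, so your write-up serves as a valid self-contained substitute for the citation.
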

\par
According to Lemma \ref{L9}, we have
\begin{corollary}\label{C1} Green operator $G(t)$ satisfies:
for any $p>0,$
\par
1) Let $k$ and $m$ be nonnegative integers. Then, for any function
$h\in H^{k+m-1}$, the following estimates hold:
\begin{eqnarray*}
\begin{array}{l}
\begin{array}{llll}
\sup_{0\leq t\leq T}\|G^{(k)}(t)h\|_{H^{m}}^{p}\leq \|h\|_{H^{k+m-1}}^{p},
~~
{\rm{for}}~~0\leq k+m\leq2.
\end{array}
\end{array}
\end{eqnarray*}
\par
2) Let $f(\cdot,t)\in L^{p}(\Omega\times(0,T);L^{2}(I))$ satisfy
\begin{eqnarray*}
\begin{array}{l}
\begin{array}{llll}
\mathbb{E}\int_{0}^{T}\|f(\cdot,t)\|^{p}dt<\infty.
\end{array}
\end{array}
\end{eqnarray*}
Then
\begin{eqnarray*}
\begin{array}{l}
\begin{array}{llll}
\int_{0}^{t}G(t-s)f(\cdot,s)ds
\end{array}
\end{array}
\end{eqnarray*} is a continuous, adapted $H^{1}$-valued process and its time
derivative is a continuous $L^{2}(I)$-valued process such that
\begin{eqnarray*}
\begin{array}{l}
\begin{array}{llll}
\mathbb{E}\sup_{0\leq t\leq T}\|\int_{0}^{t}G(t-s)f(\cdot,s)ds\|_{H^{k}}^{p}\leq C_{k}T^{p-1}\mathbb{E}\int_{0}^{T}\|f(\cdot,s)\|_{H^{k-1}}^{p}ds,
~~
{\rm{for}}~~k=0,1,

\end{array}
\end{array}
\end{eqnarray*}
and
\begin{eqnarray*}
\begin{array}{l}
\begin{array}{llll}
\mathbb{E}\sup_{0\leq t\leq T}\|(\int_{0}^{t}G(t-s)f(\cdot,s)ds)^{\prime}\|^{p}\leq T^{p-1}\mathbb{E}\int_{0}^{T}\|f(\cdot,s)\|^{p}ds,
~~
{\rm{for}}~~k=0,1,

\end{array}
\end{array}
\end{eqnarray*}
\end{corollary}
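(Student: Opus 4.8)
The plan is to deduce both parts of the corollary from the $L^{2}$-version recorded in Lemma \ref{L9} together with elementary real-variable inequalities; no new analysis of the Green operator is required. For part 1) I would simply raise the $L^{2}$ estimate to the power $p/2$: since $x\mapsto x^{p/2}$ is nondecreasing on $[0,\infty)$ and commutes with the supremum, for $0\leq k+m\leq2$ and $h\in H^{k+m-1}$ one gets
\begin{eqnarray*}
\sup_{0\leq t\leq T}\|G^{(k)}(t)h\|_{H^{m}}^{p}=\Big(\sup_{0\leq t\leq T}\|G^{(k)}(t)h\|_{H^{m}}^{2}\Big)^{\frac{p}{2}}\leq\|h\|_{H^{k+m-1}}^{p},
\end{eqnarray*}
which is the asserted bound for every $p>0$.

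For part 2), I would first note that the qualitative assertions — continuity, adaptedness, the $H^{1}$-valued character of the convolution and the $L^{2}(I)$-valued character of its time derivative — do not involve the integrability exponent and follow exactly as in Lemma \ref{L9}, the well-definedness of the integral being furnished by the deterministic bound of part 1). For the quantitative estimates I would bring the $H^{k}$-norm inside the integral by Minkowski's integral inequality and apply part 1) to each frozen factor. Using $\|G(t-s)f(\cdot,s)\|_{H^{k}}\leq\|f(\cdot,s)\|_{H^{k-1}}$ (the case $m=k$ of part 1), $k=0,1$) yields the pathwise majorant
\begin{eqnarray*}
\Big\|\int_{0}^{t}G(t-s)f(\cdot,s)ds\Big\|_{H^{k}}\leq\int_{0}^{t}\|f(\cdot,s)\|_{H^{k-1}}ds\leq\int_{0}^{T}\|f(\cdot,s)\|_{H^{k-1}}ds,
\end{eqnarray*}
whose right-hand side is independent of $t$, so the supremum over $[0,T]$ is harmless. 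Raising to the $p$-th power, invoking Hölder's inequality in the form $\big(\int_{0}^{T}g\,ds\big)^{p}\leq T^{p-1}\int_{0}^{T}g^{p}ds$, and taking expectations then gives the stated bound. The time-derivative estimate is identical once one observes that $G(0)=0$ (the kernel $K$ carries the factor $\sin(\sqrt{\alpha_{k}}t)$), so differentiation under the integral produces $\int_{0}^{t}G'(t-s)f(\cdot,s)ds$ with no boundary term, after which $\|G'(t-s)f(\cdot,s)\|\leq\|f(\cdot,s)\|$ (the case $k=1,m=0$ of part 1)) is applied.

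The main obstacle I anticipate is the range $0<p<1$. The Hölder step $\big(\int_{0}^{T}g\,ds\big)^{p}\leq T^{p-1}\int_{0}^{T}g^{p}ds$ is elementary precisely when $p\geq1$; for $0<p<1$ the map $x\mapsto x^{p}$ is concave and Jensen's inequality reverses the direction, so the argument above does not deliver the stated form in the subunital regime. I therefore expect the delicate point to be reconciling $0<p<1$ with the exponent $T^{p-1}$: the natural resolution is to carry the pathwise majorant $\int_{0}^{T}\|f(\cdot,s)\|_{H^{k-1}}ds$ directly and keep the honest power of $T$ it produces, or to restrict the quantitative bound to $p\geq1$. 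Apart from this range the corollary is a direct quantitative upgrade of Lemma \ref{L9}, and in either case the deterministic kernel estimates are what feed into the subsequent stochastic averaging bounds.
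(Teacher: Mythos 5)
Your derivation is essentially the intended one: the paper offers no argument at all for this corollary beyond the words ``According to Lemma \ref{L9}'', and the natural proof is exactly what you wrote --- part 1) by monotonicity of $x\mapsto x^{p/2}$, part 2) by Minkowski's integral inequality, the deterministic bound $\|G^{(j)}(t-s)h\|_{H^{m}}\leq\|h\|_{H^{j+m-1}}$ from part 1), the observation $G(0)=0$ for the derivative term, and then H\"older in time to produce $T^{p-1}$. Your caveat about $0<p<1$ is also correct and identifies a genuine defect in the statement as written: the step $\bigl(\int_{0}^{T}g\,ds\bigr)^{p}\leq T^{p-1}\int_{0}^{T}g^{p}\,ds$ reverses for $0<p<1$ by concavity, so the quantitative bounds of part 2) are not justified for the full range ``any $p>0$'' claimed in the corollary. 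In practice this is harmless for the rest of the paper, since every invocation of the corollary (e.g.\ in (\ref{16}), (\ref{18}) and Lemma \ref{L4}) uses an exponent at least $1$, and Proposition \ref{P1} is explicitly reduced to large $p$; but the clean fix is either to restrict the corollary to $p\geq1$ or, as you suggest, to keep the pathwise majorant $\int_{0}^{T}\|f(\cdot,s)\|_{H^{k-1}}\,ds$ and state the bound in that form for small $p$.
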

\subsection{The heat semigroup $\{S(t)\}_{t\geq 0}$ }
\par
According to \cite[P83]{Y1}, the operator $-\mathcal{L}$ is positive, self-adjoint and sectorial on the
domain $\mathcal{D}(-\mathcal{L})=H^{2}(I)\cap H_{0}^{1}(I)$. By spectral theory, we may define
the fractional powers $(-\mathcal{L})^{\alpha}$ of $-\mathcal{L}$ with the domain $\mathcal{D}((-\mathcal{L})^{\alpha})$ for any
$\alpha\in [0,1]$. We know that the semigroup $\{S(t)\}_{t\geq 0}$ generated by the
operator $-\mathcal{L}$ is analytic on $L^{p}(I)$ for all $1\leq p\leq\infty$ and enjoys the
following properties \cite{P1}:
\begin{eqnarray}\label{13}
\begin{array}{l}
\begin{array}{llll}
S(t)(-\mathcal{L})^{\alpha}=(-\mathcal{L})^{\alpha}S(t),~~~~~~~~~~~~~~~~~~~~~~~~~~~~~~~\alpha\geq0,
\\\|(-\mathcal{L})^{\alpha}S(t)\varphi\|_{L^{p}(I)}\leq Ct^{-\alpha}\|\varphi\|_{L^{p}(I)},~~~~~~~~~~~~\alpha\geq0,t\geq0,
\\
\|D^{j}S(t)\varphi\|_{L^{q}(I)}\leq Ct^{-\frac{1}{2}(\frac{1}{p}-\frac{1}{q}+j)}\|\varphi\|_{L^{p}(I)},~~q\geq p\geq1,t\geq0,
\end{array}
\end{array}
\end{eqnarray}
where $D^{j}$ denotes the $j-$th order derivative with respect to the spatial variable.

\subsection{Some useful inequalities}

\begin{lemma}\label{L7}
Let $y(t)$ be a nonnegative function, if
\begin{eqnarray*}
\begin{array}{l}
\begin{array}{llll}
y^{\prime}\leq -ay+f,

\end{array}
\end{array}
\end{eqnarray*}
we have
\begin{eqnarray*}
\begin{array}{l}
\begin{array}{llll}
y(t)\leq y(s)e^{-a(t-s)}+\int_{s}^{t}e^{-a(t-\tau)}f(\tau)d\tau.

\end{array}
\end{array}
\end{eqnarray*}
\end{lemma}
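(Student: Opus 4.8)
The plan is to prove this by the classical integrating-factor method, treating the differential inequality as a comparison that can be integrated directly. The key observation is that the bound $y'\le -ay+f$ is exactly the statement $y'+ay\le f$, and the left-hand side becomes an exact derivative once we multiply through by the exponential weight $e^{a\tau}$.

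First I would set $z(\tau)=e^{a\tau}y(\tau)$ and compute, by the product rule, $z'(\tau)=e^{a\tau}\bigl(y'(\tau)+ay(\tau)\bigr)$. Feeding in the hypothesis $y'(\tau)+ay(\tau)\le f(\tau)$ and using that $e^{a\tau}>0$ gives the pointwise bound $z'(\tau)\le e^{a\tau}f(\tau)$, which is now a plain inequality between a derivative and a locally integrable right-hand side.

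Next I would integrate this inequality over $[s,t]$, obtaining $z(t)-z(s)\le\int_{s}^{t}e^{a\tau}f(\tau)\,d\tau$, that is, $e^{at}y(t)\le e^{as}y(s)+\int_{s}^{t}e^{a\tau}f(\tau)\,d\tau$. Multiplying both sides by $e^{-at}$ and absorbing it into the integral via $e^{-at}e^{a\tau}=e^{-a(t-\tau)}$ yields precisely the asserted estimate $y(t)\le y(s)e^{-a(t-s)}+\int_{s}^{t}e^{-a(t-\tau)}f(\tau)\,d\tau$.

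There is no serious obstacle here; the only point requiring a word of care is the regularity under which the integration step is legitimate. If $y$ is merely absolutely continuous, as is typical when such comparison lemmas are applied to energy functionals, then $z$ is absolutely continuous as well, the product rule holds almost everywhere, and the fundamental theorem of calculus applies to $z$, so integrating $z'\le e^{a\tau}f$ is valid. The nonnegativity of $y$ plays no role in the argument itself and is recorded only because that is the regime in which the lemma will be invoked in the sequel.
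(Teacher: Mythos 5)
Your proof is correct: the integrating-factor argument (multiply by $e^{a\tau}$, integrate over $[s,t]$, rescale by $e^{-at}$) is the standard and essentially unique route to this comparison estimate, and your remark on absolute continuity covers the only regularity issue. The paper states this lemma without proof, implicitly relying on exactly this classical argument, so there is nothing to reconcile.
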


\begin{lemma}
If ~$a,b\in \mathbb{R}$, $p>0,$ it holds that
\begin{eqnarray*}
\begin{array}{l}
(|a|+|b|)^{p}\leq\left\{
\begin{array}{llll}
|a|^{p}+|b|^{p}~~~~~~~~~~~~~~~0<p\leq 1,
\\2^{p-1}(|a|^{p}+|b|^{p})~~~~~~~~~
p>1.

\end{array}
\right.
\end{array}
\end{eqnarray*}
\end{lemma}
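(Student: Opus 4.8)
The plan is to reduce to nonnegative reals and then split on the two regimes according to whether $t\mapsto t^{p}$ is concave or convex on $[0,\infty)$. First I would observe that the right-hand sides depend only on $|a|$ and $|b|$, so I may set $x=|a|\geq 0$ and $y=|b|\geq 0$ and prove $(x+y)^{p}\leq x^{p}+y^{p}$ for $0<p\leq 1$ and $(x+y)^{p}\leq 2^{p-1}(x^{p}+y^{p})$ for $p>1$. The degenerate case $x=y=0$ is trivial in both regimes, so I would assume $x+y>0$ throughout.

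For the subadditive regime $0<p\leq 1$, I would normalize by $s=x+y>0$ and write
\begin{eqnarray*}
\frac{x^{p}+y^{p}}{(x+y)^{p}}=\left(\frac{x}{s}\right)^{p}+\left(\frac{y}{s}\right)^{p}.
\end{eqnarray*}
The key elementary fact is that for any $t\in[0,1]$ and $0<p\leq 1$ one has $t^{p}\geq t$; applying this to $t=x/s$ and $t=y/s$, both of which lie in $[0,1]$, yields
\begin{eqnarray*}
\left(\frac{x}{s}\right)^{p}+\left(\frac{y}{s}\right)^{p}\geq\frac{x}{s}+\frac{y}{s}=1,
\end{eqnarray*}
which after multiplying through by $(x+y)^{p}$ gives exactly $(x+y)^{p}\leq x^{p}+y^{p}$. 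The inequality $t^{p}\geq t$ for $t\in[0,1]$ itself follows since $p-1\leq 0$ forces $t^{p-1}\geq 1$, hence $t^{p}=t\cdot t^{p-1}\geq t$ (with the boundary values $t=0,1$ checked directly).

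For the convex regime $p>1$, I would invoke the convexity of $\phi(t)=t^{p}$ on $[0,\infty)$, applied to the midpoint:
\begin{eqnarray*}
\left(\frac{x+y}{2}\right)^{p}=\phi\!\left(\frac{x+y}{2}\right)\leq\frac{\phi(x)+\phi(y)}{2}=\frac{x^{p}+y^{p}}{2},
\end{eqnarray*}
and multiplying both sides by $2^{p}$ produces $(x+y)^{p}\leq 2^{p-1}(x^{p}+y^{p})$. I expect no genuine obstacle here; the only point requiring a little care is that this is really a two-case statement, and the conceptual crux is simply recognizing that the direction of the inequality and the presence or absence of the $2^{p-1}$ factor are both dictated by the concavity ($0<p\leq1$) versus convexity ($p>1$) of the power map, with the two arguments handled by the pointwise bound $t^{p}\geq t$ and by Jensen's inequality respectively.
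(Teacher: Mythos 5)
Your proof is correct and complete: the reduction to nonnegative $x=|a|$, $y=|b|$, the normalization argument with $t^{p}\geq t$ on $[0,1]$ for the case $0<p\leq 1$, and the midpoint-convexity (Jensen) argument producing the factor $2^{p-1}$ for $p>1$ are all sound. Note, however, that the paper itself states this lemma without any proof, treating it as a standard elementary inequality, so there is no argument in the paper to compare yours against; your write-up simply supplies the justification the paper omits, and it does so by the usual route (concavity versus convexity of the power map), which is exactly how this fact is normally established.
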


\subsection{Some useful estimates}
\par
The following lemmas are very useful in establishing a priori estimate for the slow-fast system.
\begin{lemma}\label{L8}
Let $A_{1}$ and $A_{2}$ be two real-valued numbers and $\sigma\geq \frac{1}{2}$. Then
the following inequality is fulfilled
\begin{eqnarray*}
\begin{array}{l}
\begin{array}{llll}
||A_{1}|^{2\sigma}A_{1}-|A_{2}|^{2\sigma}A_{2}|\leq (4\sigma-1)(|A_{1}|^{2\sigma}+|A_{2}|^{2\sigma})|A_{1}-A_{2}|.
\end{array}
\end{array}
\end{eqnarray*}
\end{lemma}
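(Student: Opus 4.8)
The plan is to treat $\phi(x) := |x|^{2\sigma}x$ as a function of a single real variable and to exploit the fact that, because $2\sigma \geq 1$, it is continuously differentiable with $\phi'(x) = (2\sigma+1)|x|^{2\sigma}$ (the derivative extends continuously by $0$ at the origin, so no case distinction at $x=0$ is needed). Writing the difference as the integral of the derivative along the segment joining $A_2$ to $A_1$,
\begin{eqnarray*}
|A_1|^{2\sigma}A_1 - |A_2|^{2\sigma}A_2 = (A_1 - A_2)\int_0^1 \phi'\big((1-t)A_2 + tA_1\big)\,dt,
\end{eqnarray*}
reduces the claim to an estimate on $\int_0^1 |(1-t)A_2 + tA_1|^{2\sigma}\,dt$.

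First I would bound the integrand pointwise. By the triangle inequality $|(1-t)A_2 + tA_1| \leq (1-t)|A_2| + t|A_1|$, and since $u \mapsto u^{2\sigma}$ is nondecreasing and convex on $[0,\infty)$ (here $2\sigma \geq 1$ is exactly what is needed), Jensen's inequality with weights $1-t$ and $t$ gives
\begin{eqnarray*}
|(1-t)A_2 + tA_1|^{2\sigma} \leq \big((1-t)|A_2| + t|A_1|\big)^{2\sigma} \leq (1-t)|A_2|^{2\sigma} + t|A_1|^{2\sigma}.
\end{eqnarray*}
Integrating over $t \in [0,1]$ collapses the right-hand side to $\tfrac12(|A_1|^{2\sigma} + |A_2|^{2\sigma})$, and since $\phi'\geq 0$ this yields
\begin{eqnarray*}
\big||A_1|^{2\sigma}A_1 - |A_2|^{2\sigma}A_2\big| \leq \frac{2\sigma+1}{2}\big(|A_1|^{2\sigma} + |A_2|^{2\sigma}\big)|A_1 - A_2|.
\end{eqnarray*}

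To finish I would compare constants: the elementary inequality $\tfrac{2\sigma+1}{2} \leq 4\sigma - 1$ is equivalent to $6\sigma \geq 3$, i.e.\ to $\sigma \geq \tfrac12$, which is precisely the hypothesis; replacing the sharp constant $\tfrac{2\sigma+1}{2}$ by the stated $4\sigma-1$ then gives the lemma. I do not anticipate a genuine obstacle here: the only points demanding a little care are the differentiability of $\phi$ at the origin and the convexity step, both of which hinge on the single assumption $2\sigma \geq 1$. An alternative, if one prefers to avoid the integral representation, is to split into the same-sign and opposite-sign cases, where the opposite-sign case follows immediately from $4\sigma - 1 \geq 1$ and the expansion of $(|A_1|^{2\sigma}+|A_2|^{2\sigma})(|A_1|+|A_2|)$, and the same-sign case from the trapezoidal estimate $\int_{A_2}^{A_1}s^{2\sigma}\,ds \leq \tfrac12(A_1^{2\sigma}+A_2^{2\sigma})(A_1-A_2)$ for the convex integrand $s^{2\sigma}$.
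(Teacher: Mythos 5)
Your proof is correct. Note that the paper does not actually prove this lemma: it only records the statement and refers the reader to Lemma 7.2 of the cited reference \cite{L3}, so there is no in-paper argument to compare against. Your integral-representation argument is self-contained and sound: $\phi(x)=|x|^{2\sigma}x$ is indeed $C^1$ with $\phi'(x)=(2\sigma+1)|x|^{2\sigma}$ (the hypothesis $2\sigma\geq 1$ guarantees the derivative extends by $0$ at the origin), the fundamental theorem of calculus along the segment is legitimate, and the triangle-inequality-plus-convexity step correctly reduces the integral to $\tfrac12(|A_1|^{2\sigma}+|A_2|^{2\sigma})$. The final comparison $\tfrac{2\sigma+1}{2}\leq 4\sigma-1\iff\sigma\geq\tfrac12$ is also right, and since the right-hand side is nonnegative the enlargement of the constant is harmless. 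In fact your argument proves a strictly sharper bound with constant $\tfrac{2\sigma+1}{2}$, which for the exponents actually used in the paper ($\sigma=1$ and $\sigma=2$) is considerably smaller than $4\sigma-1$; this costs nothing and would slightly improve the constants propagated through Lemma \ref{P5} and Lemma \ref{L4}. Your sketched alternative (splitting into same-sign and opposite-sign cases and using the trapezoidal overestimate for the convex integrand $s^{2\sigma}$) is also valid and avoids the $C^1$ discussion entirely.
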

\begin{remark}
The same results can be found in \cite[Lemma 7.2]{L3}.
\end{remark}

\begin{lemma}\cite[Lemma 7.3]{L3}
Let $A_{1}$ and $A_{2}$ be two real-valued numbers and $\sigma> 0$. Then
the following inequality is fulfilled
\begin{eqnarray*}
\begin{array}{l}
\begin{array}{llll}
(A_{1}-A_{2})(|A_{1}|^{2\sigma}A_{1}-|A_{2}|^{2\sigma}A_{2})\geq0.
\end{array}
\end{array}
\end{eqnarray*}
\end{lemma}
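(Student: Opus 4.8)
The plan is to recognize the left-hand side as nothing more than the statement that the scalar nonlinearity $\phi(x)=|x|^{2\sigma}x$ is monotone nondecreasing on $\mathbb{R}$. Indeed, writing $\phi(A_1)=|A_1|^{2\sigma}A_1$ and $\phi(A_2)=|A_2|^{2\sigma}A_2$, the claimed inequality is exactly $(A_1-A_2)\bigl(\phi(A_1)-\phi(A_2)\bigr)\geq 0$, and this holds for every pair of reals precisely when $\phi$ is nondecreasing. So the entire task reduces to checking monotonicity of $\phi$, and I would make this reduction explicit at the outset.

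To verify the monotonicity I would first rewrite $\phi$ in the form $\phi(x)=\operatorname{sgn}(x)\,|x|^{2\sigma+1}$, which exhibits it as an odd function whose restriction to $[0,\infty)$ is the increasing map $x\mapsto x^{2\sigma+1}$ (increasing because $2\sigma+1>0$). The cleanest self-contained route is a three-case argument on the signs of $A_1,A_2$. If $A_1,A_2\geq 0$, then $\phi(A_i)=A_i^{2\sigma+1}$ and the two factors $A_1-A_2$ and $A_1^{2\sigma+1}-A_2^{2\sigma+1}$ carry the same sign, so their product is nonnegative; the case $A_1,A_2\leq 0$ follows immediately by replacing $A_i$ with $-A_i$ and invoking the oddness $\phi(-x)=-\phi(x)$. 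In the mixed case, say $A_1\geq 0\geq A_2$, one has $A_1-A_2\geq 0$ while $\phi(A_1)\geq 0\geq \phi(A_2)$, hence $\phi(A_1)-\phi(A_2)\geq 0$ and the product is again nonnegative; the remaining mixed subcase is symmetric. As an alternative one may simply note that for $x\neq 0$ the derivative $\phi'(x)=(2\sigma+1)|x|^{2\sigma}\geq 0$, while $\phi$ is continuous at $0$, so $\phi$ is nondecreasing on all of $\mathbb{R}$.

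There is no genuine obstacle here: the statement is the elementary monotonicity of the odd power nonlinearity, and every branch of the argument is a one-line verification. The only points deserving a word of care are the behaviour at $x=0$, which is handled by $\phi(0)=0$ together with continuity, and the sign bookkeeping in the negative and mixed cases, both of which are trivially dispatched using the oddness of $\phi$.
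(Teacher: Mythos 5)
Your proof is correct, but it is worth pointing out that the paper itself contains no proof of this statement at all: the lemma is quoted directly from the literature, with the header citation and the accompanying remark both pointing to \cite[Lemma 7.3]{L3}, exactly as the neighboring Lipschitz-type estimate is quoted from \cite[Lemma 7.2]{L3}. What you have supplied is therefore a self-contained elementary argument for a result the paper outsources. Your route is sound in every branch: you reduce the bilinear inequality to the monotonicity of $\phi(x)=|x|^{2\sigma}x=\operatorname{sgn}(x)\,|x|^{2\sigma+1}$, and then check monotonicity either by the sign case analysis (the same-sign cases resting on the strict monotonicity of $x\mapsto x^{2\sigma+1}$ on $[0,\infty)$, valid since $2\sigma+1>0$; the mixed case using only that $\phi$ preserves sign; the origin handled by $\phi(0)=0$) or, alternatively, by $\phi'(x)=(2\sigma+1)|x|^{2\sigma}\geq 0$ for $x\neq 0$ together with continuity at $0$. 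What your argument buys is transparency and self-containedness, and it could be inserted verbatim as the missing proof; what the paper's citation buys is brevity and uniformity, since the adjacent lemmas are imported from the same reference.
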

\begin{remark}
The same results can be found in \cite[Lemma 7.3]{L3}.
\end{remark}
Thus we have

\begin{corollary}\label{L1}
For any $A_{1},A_{2}\in \mathbb{R},$ we have
\begin{eqnarray*}
\begin{array}{l}
\begin{array}{llll}
(A_{1}-A_{2},\mathcal{F}(A_{1})-\mathcal{F}(A_{2}))\leq 0,
\\
(A_{1}-A_{2},\mathcal{G}(A_{1})-\mathcal{G}(A_{2}))\leq 0.
\end{array}
\end{array}
\end{eqnarray*}
\end{corollary}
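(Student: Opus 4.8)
The plan is to reduce both inequalities to the scalar monotonicity statement proved in the preceding (unlabelled) lemma cited from \cite[Lemma 7.3]{L3}, namely that for every $\sigma>0$ the map $s\mapsto|s|^{2\sigma}s$ is nondecreasing, so that $(A_{1}-A_{2})(|A_{1}|^{2\sigma}A_{1}-|A_{2}|^{2\sigma}A_{2})\geq0$. Since $\mathcal{F}$ and $\mathcal{G}$ are assembled precisely out of such monomials, the corollary should follow term by term, with the negative signs in the definitions flipping the sense of the inequality. For real numbers the bracket $(\cdot,\cdot)$ reduces to ordinary multiplication.

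First I would treat $\mathcal{G}$. By definition $\mathcal{G}(u)=-|u|^{2}u$, hence
\[
(A_{1}-A_{2})\bigl(\mathcal{G}(A_{1})-\mathcal{G}(A_{2})\bigr)=-(A_{1}-A_{2})\bigl(|A_{1}|^{2}A_{1}-|A_{2}|^{2}A_{2}\bigr).
\]
Applying the lemma with $\sigma=1$ shows the right-hand side is $\leq0$, which is the second claimed inequality.

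Next, for $\mathcal{F}(u)=-|u|^{2}u-|u|^{4}u$ I would split the difference into its cubic and quintic parts,
\[
(A_{1}-A_{2})\bigl(\mathcal{F}(A_{1})-\mathcal{F}(A_{2})\bigr)=-(A_{1}-A_{2})\bigl(|A_{1}|^{2}A_{1}-|A_{2}|^{2}A_{2}\bigr)-(A_{1}-A_{2})\bigl(|A_{1}|^{4}A_{1}-|A_{2}|^{4}A_{2}\bigr),
\]
and apply the lemma with $\sigma=1$ to the first bracket and $\sigma=2$ to the second. Both products are then nonnegative, so their negatives sum to a quantity $\leq0$, giving the first inequality.

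Finally, since these are pointwise scalar inequalities valid for all real arguments, integrating over $I$ upgrades them to the $L^{2}(I)$ inner-product bounds $(u-v,\mathcal{F}(u)-\mathcal{F}(v))\leq0$ and $(u-v,\mathcal{G}(u)-\mathcal{G}(v))\leq0$ that are actually used in the later a priori estimates. I do not expect any genuine obstacle here: the entire content sits in the cited monotonicity lemma, and the only points requiring care are matching the exponents $\sigma=1,2$ to the cubic and quintic nonlinearities and keeping track of the sign conventions built into the definitions of $\mathcal{F}$ and $\mathcal{G}$.
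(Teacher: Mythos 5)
Your proposal is correct and matches the paper's (implicit) argument exactly: the paper states this corollary as an immediate consequence of the cited monotonicity lemma from \cite[Lemma 7.3]{L3}, applied with $\sigma=1$ to the cubic term and $\sigma=2$ to the quintic term, with the minus signs in $\mathcal{F}$ and $\mathcal{G}$ reversing the inequality. Your additional remark about integrating the pointwise inequality to obtain the $L^{2}(I)$ inner-product version is exactly how the corollary is used later, so nothing is missing.
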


\par
The following lemma is very useful in establishing a priori estimate for the slow-fast system.
\begin{lemma}\label{L5}
If $\sigma>0,$ we have
\begin{eqnarray*}
\begin{array}{l}
\begin{array}{llll}
(-A_{xx},-|A|^{2\sigma}A)\leq 0.
\end{array}
\end{array}
\end{eqnarray*}
\end{lemma}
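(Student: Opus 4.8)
The plan is to prove the inequality by a single integration by parts, exploiting the Dirichlet boundary conditions that the slow/fast components carry (so that $A\in H_0^1(I)$, as stipulated in the hypotheses and in the definition of $\lambda$). First I would unwind the inner product: since $(u,v)=\int_I uv\,dx$, we have
\begin{eqnarray*}
(-A_{xx},-|A|^{2\sigma}A)=\int_I A_{xx}\,|A|^{2\sigma}A\,dx.
\end{eqnarray*}
Here the expression only makes sense when $A_{xx}\in L^2(I)$, i.e. $A\in H^2(I)\cap H_0^1(I)$, which is precisely the regularity class in which the lemma will be invoked when estimating $\mathcal{L}$ against the nonlinearity.

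Next I would integrate by parts in $x$. The boundary contribution is $\big[A_x\,|A|^{2\sigma}A\big]_0^1$, and because $A$ vanishes at $x=0$ and $x=1$ the factor $|A|^{2\sigma}A$ vanishes there, so the boundary term is zero. This leaves
\begin{eqnarray*}
\int_I A_{xx}\,|A|^{2\sigma}A\,dx=-\int_I A_x\,\frac{d}{dx}\big(|A|^{2\sigma}A\big)\,dx.
\end{eqnarray*}
The key computation is the derivative of the nonlinearity. Writing $\phi(s)=|s|^{2\sigma}s=(s^2)^{\sigma}s$, one checks $\phi'(s)=(2\sigma+1)|s|^{2\sigma}$, and since $2\sigma>0$ this $\phi'$ is continuous at $s=0$, so $\phi\in C^1(\mathbb{R})$ and the chain rule gives $\frac{d}{dx}(|A|^{2\sigma}A)=(2\sigma+1)|A|^{2\sigma}A_x$. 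Substituting,
\begin{eqnarray*}
(-A_{xx},-|A|^{2\sigma}A)=-(2\sigma+1)\int_I |A|^{2\sigma}(A_x)^2\,dx\leq 0,
\end{eqnarray*}
where the final inequality holds because $2\sigma+1>0$, $|A|^{2\sigma}\geq 0$, and $(A_x)^2\geq 0$ pointwise.

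The only delicate point is justifying the chain rule and the integration by parts when $\sigma$ is small, since the map $s\mapsto|s|^{2\sigma}$ is not differentiable at the origin for $0<\sigma<\tfrac12$. I expect this to be the main (though minor) obstacle. The clean way around it is to regularize: replace $|A|^{2\sigma}A$ by $(A^2+\delta)^{\sigma}A$, which is smooth for every $\delta>0$, perform the identical integration by parts to obtain $-\int_I\!\big[(A^2+\delta)^\sigma+2\sigma A^2(A^2+\delta)^{\sigma-1}\big](A_x)^2\,dx\leq 0$, and then let $\delta\to 0$ by dominated convergence, using $A\in H^1(I)\hookrightarrow L^\infty(I)$ to control the integrand. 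Since the composite $\phi$ is in fact $C^1$ for all $\sigma>0$, one may alternatively cite the standard chain rule for $C^1$ superpositions with $H^1$ functions; either route closes the argument without further computation.
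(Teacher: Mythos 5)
Your proof is correct and complete: the integration by parts with vanishing boundary terms (from the Dirichlet conditions), the identity $\tfrac{d}{dx}(|A|^{2\sigma}A)=(2\sigma+1)|A|^{2\sigma}A_x$ valid for every $\sigma>0$ because $s\mapsto|s|^{2\sigma}s$ is $C^1$, and the resulting sign $-(2\sigma+1)\int_I|A|^{2\sigma}(A_x)^2\,dx\leq 0$ are all sound, and your regularization remark correctly disposes of the only delicate point. Note that the paper itself offers no proof of this lemma; it merely cites \cite[Lemma 7.4]{L3} and \cite[Lemma 2.6]{Y1}, and the argument you give is precisely the standard one contained in those references, so there is no substantive difference in approach to report.
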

\begin{remark}
The same results can be found in \cite[Lemma 7.4]{L3} and \cite[Lemma 2.6]{Y1}.
\end{remark}

\subsection{Preliminary results on the fast motion equation (\ref{3})}
\par
First, we consider the stochastic heat equation, the solution of (\ref{3}) will be denoted by $B^{A,B_{0}}.$
\par
We could have the following property for the solution of (\ref{3}):
\begin{lemma}\label{L6} For $A\in L^{2}(I),$ let $B^{A,X}$ be the solution of
\begin{equation}\label{14}
\begin{array}{l}
\left\{
\begin{array}{llll}
dB=[\mathcal{L}(B)+\mathcal{G}(B)+g(A,B)]dt+\sigma_{2}dW_{2}
\\B(0,t)=0=B(1,t)
\\B(x,0)=X(x)
\end{array}
\right.
\end{array}
\begin{array}{lll}
{\rm{in}}~I\times(0,+\infty)\\
{\rm{in}}~(0,+\infty)\\
{\rm{in}}~I.
\end{array}
\end{equation}
\par
1) There exists a positive constant $C$ such that $B^{A,X}$ satifies:
\begin{eqnarray}\label{22}
\begin{array}{l}
\begin{array}{llll}
\mathbb{E}\|B^{A,X}(t)\|^{2}\leq e^{-\alpha t}\|X\|^{2}+C(\|A\|^{2}+1),
\\
\mathbb{E}\|B^{A,X}(t)-B^{A,Y}(t)\|^{2}\leq \|X-Y\|^{2}e^{-2\alpha t},

\end{array}
\end{array}
\end{eqnarray}
for $t\geq0.$
\par
2) There is unique invariant measure $\mu^{A}$ for the Markov
semigroup $P_{t}^{A}$ associated with the system (\ref{14}) in $L^{2}(I).$
Moreover, we have
\begin{eqnarray*}
\int_{L^{2}(I)}\|z\|^{2}\mu^{A}(dz)\leq C(1+\|A\|^{2}).
\end{eqnarray*}
\par
3) There exists two positive constants $C$ such that $B^{A,X}$ satifies:
\begin{eqnarray*}
\begin{array}{l}
\begin{array}{llll}
\|\mathbb{E}f(A,B^{A,X})-\bar{f}(A)\|^{2}\leq C(1+\|X\|^{2}+\|A\|^{2})e^{-2\alpha t}

\end{array}
\end{array}
\end{eqnarray*}
for $t\geq0.$

\end{lemma}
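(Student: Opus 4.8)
The plan is to prove the three assertions in order: first derive the two moment bounds, then use the contraction bound to construct the invariant measure, and finally deduce the ergodic estimate by a coupling argument.

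For assertion 1), I would apply Itô's formula to $t\mapsto\|B^{A,X}(t)\|^2$ (justified rigorously through a Galerkin approximation of (\ref{14}) and passage to the limit). Taking expectations removes the martingale part $2\sigma_2(B,dW_2)$ and leaves the trace term $\|\sigma_2\|_{Q_2}^2$, giving
\begin{eqnarray*}
\frac{d}{dt}\mathbb{E}\|B^{A,X}\|^2 = 2\mathbb{E}(B,\mathcal{L}(B)) + 2\mathbb{E}(B,\mathcal{G}(B)) + 2\mathbb{E}(B,g(A,B)) + \|\sigma_2\|_{Q_2}^2.
\end{eqnarray*}
Here $(B,\mathcal{L}(B)) = -\|B_x\|^2 \leq -\lambda\|B\|^2$ by hypothesis (H), $(B,\mathcal{G}(B))\leq 0$ (Corollary \ref{L1} with second argument zero), and the Lipschitz property of $g$ together with $\|g(A,0)\|\leq L_g\|A\|+C$ yields $(B,g(A,B))\leq L_g\|B\|^2 + \|B\|(L_g\|A\|+C)$. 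Since $-2\lambda+2L_g=-2\alpha$, a Young inequality $2\|B\|(L_g\|A\|+C)\leq \alpha\|B\|^2 + C(\|A\|^2+1)$ produces
\begin{eqnarray*}
\frac{d}{dt}\mathbb{E}\|B^{A,X}\|^2 \leq -\alpha\,\mathbb{E}\|B^{A,X}\|^2 + C(\|A\|^2+1),
\end{eqnarray*}
and Lemma \ref{L7} gives the first bound. For the contraction, set $Z=B^{A,X}-B^{A,Y}$; since the additive noise is identical, $Z$ solves a pathwise equation with no stochastic term, and the chain rule gives $\frac{d}{dt}\|Z\|^2 = 2(Z,\mathcal{L}(Z)) + 2(Z,\mathcal{G}(B^{A,X})-\mathcal{G}(B^{A,Y})) + 2(Z,g(A,B^{A,X})-g(A,B^{A,Y}))$. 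Using $(Z,\mathcal{L}(Z))\leq -\lambda\|Z\|^2$, the monotonicity $(Z,\mathcal{G}(B^{A,X})-\mathcal{G}(B^{A,Y}))\leq 0$ from Corollary \ref{L1}, and $(Z,g(A,B^{A,X})-g(A,B^{A,Y}))\leq L_g\|Z\|^2$, I obtain $\frac{d}{dt}\|Z\|^2\leq -2\alpha\|Z\|^2$, hence $\|Z(t)\|^2\leq e^{-2\alpha t}\|X-Y\|^2$; taking expectations completes 1).

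For assertion 2), the contraction bound shows the Markov semigroup $P_t^A$ contracts the $L^2$-Wasserstein distance, $\mathcal{W}_2(P_t^A\mu_1,P_t^A\mu_2)\leq e^{-\alpha t}\mathcal{W}_2(\mu_1,\mu_2)$, which for $t$ large is a strict contraction on the complete metric space of probability measures on $L^2(I)$ with finite second moment. Banach's fixed point theorem then furnishes a unique invariant measure $\mu^A$ (equivalently, $\mathrm{Law}(B^{A,X}(t))$ is Cauchy in $\mathcal{W}_2$ and converges to $\mu^A$ independently of $X$). Taking $X=0$ in the first bound of 1) gives $\sup_{t\geq0}\mathbb{E}\|B^{A,0}(t)\|^2\leq C(1+\|A\|^2)$, and lower semicontinuity of the second moment under weak convergence (Fatou) yields $\int_{L^2(I)}\|z\|^2\mu^A(dz)\leq C(1+\|A\|^2)$.

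For assertion 3), I exploit stationarity: let $Z$ be an $L^2(I)$-valued random variable with law $\mu^A$, coupled to $B^{A,X}$ through the same Wiener process $W_2$, so that invariance gives $\bar{f}(A)=\int f(A,z)\mu^A(dz)=\mathbb{E}f(A,B^{A,Z}(t))$ for every $t\geq0$. Using the Lipschitz property of $f$ in its second argument, Jensen's inequality and the contraction estimate of 1),
\begin{eqnarray*}
\|\mathbb{E}f(A,B^{A,X}(t))-\bar{f}(A)\| &\leq& L_f\,\mathbb{E}\|B^{A,X}(t)-B^{A,Z}(t)\|\\
&\leq& L_f\big(\mathbb{E}\|X-Z\|^2\big)^{1/2}e^{-\alpha t}.
\end{eqnarray*}
Since $\mathbb{E}\|X-Z\|^2\leq 2\|X\|^2 + 2\int\|z\|^2\mu^A(dz)\leq C(1+\|X\|^2+\|A\|^2)$ by 2), squaring yields the claimed bound. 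The most delicate points are the rigorous application of Itô's formula to the $L^2$-norm of the mild solution of (\ref{14}), which I handle by Galerkin approximation, and the verification that $P_t^A$ is a genuine $\mathcal{W}_2$-contraction; the latter follows cleanly once the pathwise contraction in 1) is established, so the substantive work is concentrated in the energy estimates of part 1).
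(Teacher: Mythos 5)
Your proposal is correct and follows essentially the same route as the paper: the same It\^{o}/energy estimate with the splitting $g(A,B)=(g(A,B)-g(A,0))+g(A,0)$ and Lemma \ref{L7} for the first bound, the same pathwise contraction for the difference equation, and the same invariance identity $\bar{f}(A)=\mathbb{E}\!\int f(A,B^{A,Y}(t))\,\mu^{A}(dY)$ combined with the contraction for part 3). Your explicit Wasserstein fixed-point argument in part 2) merely spells out the ``standard argument'' the paper delegates to \cite{C1,C3}, so there is no substantive difference.
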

\begin{proof}
1) $\bullet$ By applying the generalized It\^{o} formula with $\frac{1}{2}\|B^{A,X}\|^{2},$ we can obtain that
\begin{eqnarray*}
\begin{array}{l}
\begin{array}{llll}
\frac{1}{2}\|B^{A,X}\|^{2}=\frac{1}{2}\|X\|^{2}+\int_{0}^{t}(B^{A,X},\mathcal{L}B^{A,X}+\mathcal{G} (B^{A,X})+g(A,B^{A,X}))ds
\\~~~~~~~~~~~~~~+\int_{0}^{t}(B^{A,X},\sigma_{2}dW_{2})+\frac{1}{2}\int_{0}^{t}\|\sigma_{2}\|_{Q_{2}}^{2}ds
\\=\frac{1}{2}\|X\|^{2}-\int_{0}^{t}\|B_{x}^{A,X}\|^{2}ds+\int_{0}^{t}(B^{A,X},g(A,B^{A,X}))ds
\\~~~~~~~~~+\int_{0}^{t}(B^{A,X},\mathcal{G} (B^{A,X}))ds+\int_{0}^{t}(B^{A,X},\sigma_{2}dW_{2})+\frac{1}{2}\int_{0}^{t}\|\sigma_{2}\|_{Q_{2}}^{2}ds
.
\end{array}
\end{array}
\end{eqnarray*}
Taking mathematical expectation from both sides of above equation, we have
\begin{eqnarray*}
\begin{array}{l}
\begin{array}{llll}
\mathbb{E}\|B^{A,X}\|^{2}=\|X\|^{2}-2\int_{0}^{t}\mathbb{E}\|B_{x}^{A,X}\|^{2}ds+2\int_{0}^{t}\mathbb{E}(B^{A,X},g(A,B^{A,X}))ds
\\~~~~~~~~~+2\int_{0}^{t}\mathbb{E}(B^{A,X},\mathcal{G} (B^{A,X}))ds+\int_{0}^{t}\|\sigma_{2}\|_{Q_{2}}^{2}ds
,
\end{array}
\end{array}
\end{eqnarray*}
namely,
\begin{eqnarray*}
\begin{array}{l}
\begin{array}{llll}
\frac{d}{dt}\mathbb{E}\|B^{A,X}\|^{2}
\\=-2\mathbb{E}\|B_{x}^{A,X}\|^{2}+2\mathbb{E}(B^{A,X},g(A,B^{A,X}))
+2\mathbb{E}(B^{A,X},\mathcal{G} (B^{A,X}))+\|\sigma_{1}\|_{Q_{1}}^{2}
.
\end{array}
\end{array}
\end{eqnarray*}
According to Corollary \ref{L1}, we have
\begin{eqnarray*}
\begin{array}{l}
\begin{array}{llll}
(B^{A,X},\mathcal{G} (B^{A,X}))\leq 0,
\end{array}
\end{array}
\end{eqnarray*}
thus,
\begin{eqnarray*}
\begin{array}{l}
\begin{array}{llll}
\frac{d}{dt}\mathbb{E}\|B^{A,X}\|^{2}
\\\leq-2\mathbb{E}\|B_{x}^{A,X}\|^{2}+2\mathbb{E}(B^{A,X},g(A,B^{A,X}))+\|\sigma_{1}\|_{Q_{1}}^{2}
\\=-2\mathbb{E}\|B_{x}^{A,X}\|^{2}+2\mathbb{E}(B^{A,X},g(A,B^{A,X})-g(A,0))+2\mathbb{E}(B^{A,X},g(A,0))+\|\sigma_{1}\|_{Q_{1}}^{2}

\\\leq-2\lambda\mathbb{E}\|B^{A,X}\|^{2}+2L_{g}\mathbb{E}\|B^{A,X}\|^{2}+2\mathbb{E}(B^{A,X},g(A,0))+\|\sigma_{1}\|_{Q_{1}}^{2}
,
\end{array}
\end{array}
\end{eqnarray*}
by using the Young inequality, we have
\begin{eqnarray*}
\begin{array}{l}
\begin{array}{llll}
\frac{d}{dt}\mathbb{E}\|B^{A,X}\|^{2}
\\\leq-2\lambda\mathbb{E}\|B^{A,X}\|^{2}+2L_{g}\mathbb{E}\|B^{A,X}\|^{2}+2\mathbb{E}\|B^{A,X}\|\|g(A,0)\|+\|\sigma_{1}\|_{Q_{1}}^{2}
\\\leq-2(\lambda-L_{g})\mathbb{E}\|B^{A,X}\|^{2}+C\mathbb{E}\|B^{A,X}\|(\|A\|+1)+\|\sigma_{1}\|_{Q_{1}}^{2}
\\\leq-2(\lambda-L_{g})\mathbb{E}\|B^{A,X}\|^{2}+(\lambda-L_{g})\mathbb{E}\|B^{A,X}\|^{2}+C\|A\|^{2}+\|\sigma_{1}\|_{Q_{1}}^{2}+C
\\=-(\lambda-L_{g})\mathbb{E}\|B^{A,X}\|^{2}+C\|A\|^{2}+\|\sigma_{1}\|_{Q_{1}}^{2}+C
\\=-\alpha\mathbb{E}\|B^{A,X}\|^{2}+C\|A\|^{2}+\|\sigma_{1}\|_{Q_{1}}^{2}+C
.
\end{array}
\end{array}
\end{eqnarray*}
Hence, by applying Lemma \ref{L7} with $\mathbb{E}\|B^{A,X}\|^{2}$, we have
\begin{eqnarray*}
\begin{array}{l}
\begin{array}{llll}
\mathbb{E}\|B^{A,X}(t)\|^{2}\leq e^{-\alpha t}\|X\|^{2}+C(\|A\|^{2}+1).
\end{array}
\end{array}
\end{eqnarray*}

\par
$\bullet$
It is easy to see
\begin{eqnarray*}
\begin{array}{l}
\left\{
\begin{array}{llll}
d(B^{A,X}-B^{A,Y})=[\mathcal{L}(B^{A,X}-B^{A,Y})+\mathcal{G}(B^{A,X})-\mathcal{G}(B^{A,Y})+g(A,B^{A,X})-g(A,B^{A,Y})]dt
\\(B^{A,X}-B^{A,Y})(0,t)=0=(B^{A,X}-B^{A,Y})(1,t)
\\(B^{A,X}-B^{A,Y})(x,0)=X-Y
\end{array}
\right.
\end{array}
\begin{array}{lll}
\\
{\rm{in}}~Q\\
{\rm{in}}~(0,T)\\
{\rm{in}}~I,
\end{array}
\end{eqnarray*}
thus, it follows from the energy method that
\begin{eqnarray*}
\begin{array}{l}
\begin{array}{llll}
\frac{1}{2}\|B^{A,X}-B^{A,Y}\|^{2}

\\=\frac{1}{2}\|X-Y\|^{2}+\int_{0}^{t}(B^{A,X}-B^{A,Y},\mathcal{L}(B^{A,X}-B^{A,Y})+\mathcal{G}(B^{A,X})-\mathcal{G}(B^{A,Y})+g(A,B^{A,X})-g(A,B^{A,Y}))ds

\\=\frac{1}{2}\|X-Y\|^{2}-\int_{0}^{t}\|(B^{A,X}-B^{A,Y})_{x}\|^{2}ds+\int_{0}^{t}(B^{A,X}-B^{A,Y},\mathcal{G}(B^{A,X})-\mathcal{G}(B^{A,Y}))ds
\\~~+\int_{0}^{t}(B^{A,X}-B^{A,Y},g(A,B^{A,X})-g(A,B^{A,Y}))ds
,
\end{array}
\end{array}
\end{eqnarray*}
namely,
\begin{eqnarray*}
\begin{array}{l}
\begin{array}{llll}
\frac{d}{dt}\|B^{A,X}-B^{A,Y}\|^{2}

\\=-2\|(B^{A,X}-B^{A,Y})_{x}\|^{2}+2(B^{A,X}-B^{A,Y},\mathcal{G}(B^{A,X})-\mathcal{G}(B^{A,Y}))
\\~~~+2(B^{A,X}-B^{A,Y},g(A,B^{A,X})-g(A,B^{A,Y}))
.
\end{array}
\end{array}
\end{eqnarray*}
It follows from Lemma \ref{L1}, we have
\begin{eqnarray*}
\begin{array}{l}
\begin{array}{llll}
(B^{A,X}-B^{A,Y},\mathcal{G}(B^{A,X})-\mathcal{G}(B^{A,Y}))\leq 0,
\end{array}
\end{array}
\end{eqnarray*}
thus, we have
\begin{eqnarray*}
\begin{array}{l}
\begin{array}{llll}
\frac{d}{dt}\|B^{A,X}-B^{A,Y}\|^{2}
\\\leq-2\|(B^{A,X}-B^{A,Y})_{x}\|^{2}+2L_{g}\|B^{A,X}-B^{A,Y}\|^{2}
\\\leq-2(\lambda-L_{g})\|B^{A,X}-B^{A,Y}\|^{2}
\\=-2\alpha\|B^{A,X}-B^{A,Y}\|^{2}
,
\end{array}
\end{array}
\end{eqnarray*}
this yields
\begin{eqnarray*}
\begin{array}{l}
\begin{array}{llll}
\|B^{A,X}-B^{A,Y}\|^{2}\leq \|X-Y\|^{2}e^{-2\alpha t}
.
\end{array}
\end{array}
\end{eqnarray*}
Thus, we have
\begin{eqnarray*}
\begin{array}{l}
\begin{array}{llll}
\mathbb{E}\|B^{A,X}-B^{A,Y}\|^{2}\leq \|X-Y\|^{2}e^{-2\alpha t}
.
\end{array}
\end{array}
\end{eqnarray*}
\par
2) (\ref{22}) imply for any $A\in L^{2}(I)$ that there is unique invariant measure $\mu^{A}$ for the Markov
semigroup $P_{t}^{A}$ associated with the system (\ref{14}) in $L^{2}(I)$ such that
\begin{eqnarray*}
\begin{array}{l}
\begin{array}{llll}
\int_{L^{2}(I)}P_{t}^{A}\varphi d\mu^{A}=\int_{L^{2}(I)}\varphi d\mu^{A},~~t\geq0
\end{array}
\end{array}
\end{eqnarray*}
for any $\varphi\in B_{b}(L^{2}(I))$ the space of bounded functions on $L^{2}(I).$
\par
Then by repeating the standard argument as in \cite[Proposition 4.2]{C3} and \cite[Lemma 3.4]{C1}, the invariant
measure satisfies
\begin{eqnarray*}
\int_{L^{2}(I)}\|z\|^{2}\mu^{A}(dz)\leq C(1+\|A\|^{2}).
\end{eqnarray*}
\par
3) According to the invariant property of $\mu^{A},$ (2) and (\ref{22}), we have
\begin{eqnarray*}
\begin{array}{l}
\begin{array}{llll}
\|\mathbb{E}f(A,B^{A,X})-\bar{f}(A)\|^{2}
\\=
\|\mathbb{E}f(A,B^{A,X})-\int_{L^{2}(I)}f(A,Y)\mu ^{A}(dY)\|^{2}

\\=
\|\mathbb{E}f(A,B^{A,X})-\mathbb{E}\int_{L^{2}(I)}f(A,B^{A,Y})\mu^{A}(dY)\|^{2}
\\=
\|\int_{L^{2}(I)}\mathbb{E}[f(A,B^{A,X})-f(A,B^{A,Y})]\mu^{A}(dY)\|^{2}
\\\leq
C\int_{L^{2}(I)}\mathbb{E}\|B^{A,X}-B^{A,Y}\|^{2}\mu^{A}(dY)
\\\leq C\int_{L^{2}(I)}\|X-Y\|^{2}e^{-2\alpha t}\mu^{A}(dY)
\\\leq C(1+\|X\|^{2}+\|A\|^{2})e^{-2\alpha t}
.
\end{array}
\end{array}
\end{eqnarray*}

\end{proof}

\section{Well-posedness and a priori estimate for the slow-fast system (\ref{1}) and averaged equation (\ref{8})}
\par
We first establish the well-posedness for the slow-fast system (\ref{1}). We consider the mild solution of (\ref{1}). The Banach
contraction principle is used as the main tool for proving the existence of mild solutions of SPDE
in most of the existing papers. We first apply the fixed point theorem to the corresponding
truncated equation and give the local existence of mild solutions to (\ref{1}). Then, the energy
estimate shows that the solution is also global in time.
\subsection{Well-posedness and a priori estimate for the slow-fast system (\ref{1})}
\begin{definition}
If $(A^{\varepsilon},B^{\varepsilon})$ is an adapted process over $(\Omega,\mathcal {F},\{\mathcal {F}_{t}\}_{t\geq0},\mathbb{P})$ such that $\mathbb{P}-$a.s. the
integral equations
\begin{eqnarray*}
\begin{array}{l}
\begin{array}{llll}

A^{\varepsilon}(t)=G^{\prime}(t)A_{0}+G(t)A_{1}+\int_{0}^{t}G(t-s)[\mathcal{F}(A^{\varepsilon})+f(A^{\varepsilon}, B^{\varepsilon})](s) ds+\int_{0}^{t}G(t-s)\sigma_{1}dB_{1}
\\B^{\varepsilon}(t)=S(\frac{t}{\varepsilon})B_{0}+\frac{1}{\varepsilon}\int_{0}^{t}S(\frac{t-s}{\varepsilon})[\mathcal{G}(B^{\varepsilon})+g(A^{\varepsilon}, B^{\varepsilon})](s)ds+\frac{1}{\sqrt{\varepsilon}}\int_{0}^{t}S(\frac{t-s}{\varepsilon})\sigma_{2}dB_{2}
\end{array}
\end{array}
\end{eqnarray*}
hold true for all $t>0,$ we say that it is a mild solution for
Eqs. (\ref{1}).
\end{definition}

\begin{proposition}\label{P6}
For any $\varepsilon\in(0,1),T>0,p\geq1,$ if $A_{0},B_{0}\in H^{1}_{0}(I),A_{1}\in L^{2}(I),$ (\ref{1}) admits a unique mild solution $(A^{\varepsilon},B^{\varepsilon})\in X_{2,T}.$
\end{proposition}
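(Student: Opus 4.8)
The plan is to prove local existence and uniqueness of a mild solution via the Banach fixed point theorem applied to a cut-off (truncated) version of the system, and then to remove the truncation by establishing an a priori energy estimate that rules out finite-time blow-up, thereby extending the local solution globally in time. I would work in the space $X_{2,T}$ introduced in the mathematical setting, treating the pair $(A^{\varepsilon},B^{\varepsilon})$ together, since the two equations are coupled through $f(A^{\varepsilon},B^{\varepsilon})$ and $g(A^{\varepsilon},B^{\varepsilon})$.

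First I would define, for a fixed truncation radius $R>0$, a smooth cut-off function $\theta_R$ and replace the nonlinearities $\mathcal{F}$, $\mathcal{G}$, $f$, $g$ by their truncated counterparts so that they become globally Lipschitz on all of the relevant spaces. The point of the cut-off is that $\mathcal{F}(u)=-|u|^2u-|u|^4u$ and $\mathcal{G}(u)=-|u|^2u$ are only locally Lipschitz, whereas $f,g$ are already globally Lipschitz by hypothesis; Lemma~\ref{L8} gives the local Lipschitz bound on $\mathcal{F},\mathcal{G}$ in terms of the $H^1$-norm (using the embedding $H^1(I)\hookrightarrow L^\infty(I)$ in one dimension), and after truncation these become uniformly Lipschitz. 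I would then define the solution map $\Phi=(\Phi_1,\Phi_2)$ on $X_{2,\tau}$ by the right-hand sides of the two mild-solution integral equations. To estimate $\Phi_1$ I would apply Corollary~\ref{C1} (the Green-operator bounds) to the convolution term and the standard stochastic-convolution maximal estimate to the $\sigma_1 dW_1$ term; to estimate $\Phi_2$ I would apply the analytic-semigroup bounds \eqref{13} together with the factor-of-$\varepsilon$ scalings, and a factorization/maximal inequality for the stochastic convolution $\frac{1}{\sqrt\varepsilon}\int_0^t S(\tfrac{t-s}{\varepsilon})\sigma_2\,dW_2$. The Lipschitz constants of the truncated nonlinearities, combined with the $T^{p-1}$-type prefactors appearing in Corollary~\ref{C1} and the integrability of the semigroup singularities, yield that $\Phi$ is a contraction on $X_{2,\tau}$ for $\tau$ small enough (depending on $R,\varepsilon$). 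This gives a unique local mild solution of the truncated system, which coincides with a solution of the original system up to the stopping time $\tau_R$ at which the $H^1$-norm of $A^\varepsilon$ (together with $\|A^\varepsilon_t\|$ and $\|B^\varepsilon\|$) first reaches $R$.

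To globalize, I would derive an a priori estimate showing that $\mathbb{E}\sup_{0\le t\le T}(\|A^\varepsilon(t)\|_{H^1}^2+\|A^\varepsilon_t(t)\|^2+\|B^\varepsilon(t)\|^2)$ is finite, uniformly in the truncation, so that $\tau_R\to\infty$ as $R\to\infty$ almost surely. The natural route is the energy method: apply the Itô formula to the appropriate energy functional for the Klein-Gordon part (controlling $\|A^\varepsilon_t\|^2+\|A^\varepsilon_x\|^2$ plus the nonnegative potential contributions from $\mathcal{F}$, using that $(\,A,\mathcal{F}(A)\,)\le 0$ by Corollary~\ref{L1} and the sign structure from Lemma~\ref{L5}) and to $\|B^\varepsilon\|^2$ for the fast part, exactly as in the computation already carried out in the proof of Lemma~\ref{L6}. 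The dissipativity encoded in hypothesis (H), namely $\alpha=\lambda-L_g>0$, is what keeps the $B^\varepsilon$-energy bounded despite the $1/\varepsilon$ and $1/\sqrt\varepsilon$ factors. Feeding these bounds back, applying the Lipschitz and sublinear-growth conditions on $f,g$ together with the Young and Gronwall inequalities, and taking expectations of the supremum (using Burkholder-Davis-Gundy for the martingale terms) closes the estimate.

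The main obstacle I anticipate is the fast equation's singular scaling: the stochastic convolution $\frac{1}{\sqrt\varepsilon}\int_0^t S(\tfrac{t-s}{\varepsilon})\sigma_2\,dW_2$ and the drift convolution $\frac{1}{\varepsilon}\int_0^t S(\tfrac{t-s}{\varepsilon})[\cdots]\,ds$ must be estimated in a way that the $\varepsilon$-dependent constants remain finite (for fixed $\varepsilon$ this suffices for well-posedness, though uniformity in $\varepsilon$ is what later matters for the averaging principle). Concretely, one must verify that after the change of variables $s\mapsto s/\varepsilon$ the factors $1/\varepsilon$ and $1/\sqrt\varepsilon$ are absorbed correctly, so that the semigroup smoothing $\|(-\mathcal{L})^\alpha S(t)\|\le Ct^{-\alpha}$ and the trace-class property $\mathrm{Tr}\,Q_2<\infty$ still deliver a contraction with a genuinely small constant on a short interval. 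The coupling between the slow $H^1$-valued component and the fast $L^2$-valued component in $f$ and $g$ is handled routinely by the joint Lipschitz estimates, but care is needed to track norms in the correct spaces ($H^1$ for $A^\varepsilon$, $L^2$ for $A^\varepsilon_t$ and $B^\varepsilon$) throughout the fixed-point argument.
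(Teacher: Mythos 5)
Your proposal follows essentially the same route as the paper: a cut-off of the polynomial nonlinearities $\mathcal{F},\mathcal{G}$ (with $f,g$ left alone since they are globally Lipschitz), a Banach fixed-point argument for the truncated mild formulation using Corollary \ref{C1} for the wave Green operator and the semigroup bounds \eqref{13} for the heat part, followed by removal of the truncation via stopping times and the energy estimates of Proposition \ref{P2} (Itô formula, hypothesis (H), Burkholder--Davis--Gundy, Gronwall) to show the solution does not blow up before time $T$. The only cosmetic difference is that the paper concludes $\mathbb{P}(\tau_{\infty}<\infty)=0$ by a Chebyshev argument rather than your phrasing ``$\tau_R\to\infty$ a.s.,'' which amounts to the same thing.
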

\par
The proof of well-posedness for the slow-fast system (\ref{1}) is divided into
several steps.
\subsubsection{ Local existence}
We can establish the local well-posedness for the slow-fast system (\ref{1}) in $X_{p,T}(p\geq 1).$
\begin{lemma}\label{P5}
For any $A_{0},B_{0}\in H^{1}_{0}(I),A_{1}\in L^{2}(I),$ and $p\geq1,$ $\varepsilon\in(0,1)$ (\ref{1}) admits a unique mild solution $(A^{\varepsilon},B^{\varepsilon})\in X_{p,\tau_{\infty}},$ where $\tau_{\infty}$ is
stopping time for $p.$ Moreover, if $\tau_{\infty}<+\infty,$ then $\mathbb{P}-$a.s.
\begin{eqnarray*}
\begin{array}{l}
\begin{array}{llll}
\lim\limits_{t\rightarrow \tau_{\infty}}\|(A^{\varepsilon},B^{\varepsilon})\|_{Y_{t}}=+\infty.
\end{array}
\end{array}
\end{eqnarray*}
\end{lemma}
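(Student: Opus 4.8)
The plan is to combine a cut-off of the polynomial nonlinearities with the Banach fixed point theorem on the product space $X_{p,T}$, and then to remove the truncation by a localization and patching argument that produces the maximal time $\tau_{\infty}$ together with the blow-up alternative. Fix $\varepsilon\in(0,1)$ and $p\ge 1$ throughout, allowing every constant below to depend on $\varepsilon$ (which is harmless, since $\varepsilon$ is frozen in this lemma). For $R>0$ choose a smooth cut-off $\theta_{R}:[0,\infty)\to[0,1]$ with $\theta_{R}\equiv 1$ on $[0,R]$ and $\theta_{R}\equiv 0$ on $[2R,\infty)$, and set $\mathcal{F}_{R}(A)=\theta_{R}(\|A\|_{H^{1}})\mathcal{F}(A)$, $\mathcal{G}_{R}(B)=\theta_{R}(\|B\|_{H^{1}})\mathcal{G}(B)$. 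Since $H^{1}(I)\hookrightarrow L^{\infty}(I)$ in one space dimension, Lemma \ref{L8} applied pointwise (with $\sigma=1,2$) and integrated over $I$ shows that on the ball $\{\|A\|_{H^{1}}\le 2R\}$ the maps $\mathcal{F},\mathcal{G}$ are Lipschitz from $H^{1}(I)$ into $L^{2}(I)$ with constant $C(R)$; the cut-off then makes $\mathcal{F}_{R},\mathcal{G}_{R}:H^{1}(I)\to L^{2}(I)$ globally Lipschitz, while $f,g$ are already globally Lipschitz into $L^{2}(I)$ by hypothesis.

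I next set up the solution map for the truncated system. Define $\Phi=(\Phi_{1},\Phi_{2},\Phi_{3})$ on $X_{p,T}$, where $\Phi_{1},\Phi_{2}$ are the right-hand sides of the mild formula for $A^{\varepsilon}$ and its time derivative (the latter obtained by differentiating the Green representation, using $G'$), and $\Phi_{3}$ is the right-hand side of the mild formula for $B^{\varepsilon}$, with $\mathcal{F},\mathcal{G}$ replaced by $\mathcal{F}_{R},\mathcal{G}_{R}$. The deterministic drift terms of the wave part are controlled by Corollary \ref{C1}, which bounds both the $H^{1}$-norm of $A^{\varepsilon}$ and the $L^{2}$-norm of $A^{\varepsilon}_{t}$ by $T^{p-1}\mathbb{E}\int_{0}^{T}\|\cdot\|^{p}\,ds$; for the heat part I use the smoothing estimate in (\ref{13}), writing the $H^{1}$-norm through $(-\mathcal{L})^{1/2}$,
\[
\Big\|(-\mathcal{L})^{1/2}\tfrac{1}{\varepsilon}\int_{0}^{t}S\big(\tfrac{t-s}{\varepsilon}\big)h(s)\,ds\Big\|
\le \frac{C}{\sqrt{\varepsilon}}\int_{0}^{t}(t-s)^{-1/2}\|h(s)\|\,ds
\le \frac{2C}{\sqrt{\varepsilon}}\,\sqrt{t}\,\sup_{0\le s\le t}\|h(s)\|,
\]
whose kernel $(t-s)^{-1/2}$ is integrable and produces the factor $\sqrt{t}$. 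The two stochastic convolutions do not involve the unknown; since the noise is additive with $\mathrm{Tr}\,Q_{i}<\infty$, they are fixed elements of $L^{p}(\Omega;C([0,T];H^{1}))$ and $L^{p}(\Omega;C([0,T];L^{2}))$ with all moments finite (this requires a separate argument, as Corollary \ref{C1} concerns pathwise convolutions, not It\^o integrals), so they merely shift $\Phi$ by a fixed vector and do not affect contractivity.

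For the contraction I subtract two points $U_{1},U_{2}\in X_{p,T_{0}}$. Because $\mathcal{F}_{R},\mathcal{G}_{R},f,g$ are now globally Lipschitz into $L^{2}(I)$, bounding the drift differences pointwise by $C(R)$ times $\sup_{[0,T_{0}]}(\|\Delta A\|_{H^{1}}+\|\Delta B\|_{H^{1}})$ and combining Corollary \ref{C1} (wave, giving a factor $T_{0}$) with the heat estimate above (giving a factor $\sqrt{T_{0}}$) yields
\[
\|\Phi(U_{1})-\Phi(U_{2})\|_{X_{p,T_{0}}}\le C(R,\varepsilon)\,\big(T_{0}+\sqrt{T_{0}}\big)\,\|U_{1}-U_{2}\|_{X_{p,T_{0}}}.
\]
Hence for $T_{0}=T_{0}(R,\varepsilon)$ small enough $\Phi$ is a contraction, and its fixed point is the unique mild solution of the truncated system on $[0,T_{0}]$. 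Since the contraction length $T_{0}$ depends only on $R$ and $\varepsilon$ (through the global Lipschitz constants) and not on the initial data, it does not shrink along the iteration, so restarting at $T_{0},2T_{0},\dots$ extends the solution to a unique global-in-time truncated solution $(A^{\varepsilon}_{R},A^{\varepsilon}_{R,t},B^{\varepsilon}_{R})$ on every $[0,T]$.

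Finally I localize. Define $\tau_{R}=\inf\{t\ge 0:\|(A^{\varepsilon}_{R},B^{\varepsilon}_{R})\|_{Y_{t}}\ge R\}$, the first exit time of a continuous adapted process from a ball, hence a stopping time. By uniqueness the truncated solutions are consistent, i.e. for $R'>R$ they coincide on $[0,\tau_{R}]$, so $\tau_{R}$ is nondecreasing in $R$; I set $\tau_{\infty}=\sup_{R}\tau_{R}$ (a stopping time, possibly depending on $p$) and define $(A^{\varepsilon},B^{\varepsilon})$ on $[0,\tau_{\infty})$ by $(A^{\varepsilon},B^{\varepsilon})=(A^{\varepsilon}_{R},B^{\varepsilon}_{R})$ on $[0,\tau_{R}]$. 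This is the required local mild solution in $X_{p,\tau_{\infty}}$, with uniqueness inherited from the truncated problems. For the blow-up alternative, if $\tau_{\infty}<+\infty$ and $\|(A^{\varepsilon},B^{\varepsilon})\|_{Y_{t}}$ stayed bounded by some $R_{0}$ as $t\uparrow\tau_{\infty}$ on an event of positive probability, then $\tau_{R_{0}}=\tau_{\infty}$ there, contradicting $\tau_{R_{0}}<\tau_{R_{0}+1}\le\tau_{\infty}$; hence $\lim_{t\to\tau_{\infty}}\|(A^{\varepsilon},B^{\varepsilon})\|_{Y_{t}}=+\infty$ $\mathbb{P}$-a.s. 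The main obstacle is the contraction step: one must simultaneously control the wave component (via the Green operator, in both $H^{1}$ and the $L^{2}$-norm of the time derivative) and the fast heat component (with its $\varepsilon^{-1}$ scaling and the integrable $(t-s)^{-1/2}$ singularity) on a single product space, while the supercritical polynomial terms $\mathcal{F},\mathcal{G}$ are rendered Lipschitz only through the cut-off together with the one-dimensional embedding $H^{1}(I)\hookrightarrow L^{\infty}(I)$.
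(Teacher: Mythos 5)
Your proposal is correct and follows essentially the same route as the paper: a cut-off of the polynomial nonlinearities rendering them globally Lipschitz from $H^{1}(I)$ to $L^{2}(I)$, a Banach fixed point argument on $X_{p,T_{0}}$ combining Corollary \ref{C1} for the wave component with the $(t-s)^{-1/2}$ smoothing of the heat semigroup for the fast component, extension to $[0,T]$ because $T_{0}$ is independent of the data, and then the stopping times $\tau_{R}$, their consistency, and the blow-up alternative. The only deviations are cosmetic (instantaneous versus running-sup cut-off, keeping $\varepsilon$ explicit instead of setting $\varepsilon=1$, and your --- welcome --- extra care about the regularity of the additive stochastic convolutions, which the paper passes over).
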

\begin{proof}

Inspired from \cite{L1}, let $\rho\in C^{\infty}_{0}(\mathbb{R})$ be a cut-off function such that $\rho(r)=1$ for $r\in[0,1]$ and $\rho(r)=0$ for $r\geq 2.$ For any $R>0,y\in X_{p,t}$ and $t\in [0,T],$ we set
\begin{eqnarray*}
\begin{array}{l}
\begin{array}{llll}
\rho_{R}(y)(t)=\rho(\frac{\|y\|_{C([0,t];H^{1}(I))}}{R}).
\end{array}
\end{array}
\end{eqnarray*}
The truncated equation corresponding to (\ref{1}) is the
following stochastic partial differential equation:
\begin{equation*}
\begin{array}{l}
\left\{
\begin{array}{llll}
dA^{\varepsilon}_{t}=[\mathcal{L}(A^{\varepsilon})+\rho_{R}(A^{\varepsilon})\mathcal{F}(A^{\varepsilon})+f(A^{\varepsilon}, B^{\varepsilon})]dt+\sigma_{1}dW_{1}
\\dB^{\varepsilon}=\frac{1}{\varepsilon}[\mathcal{L}(B^{\varepsilon})+\rho_{R}(B^{\varepsilon})\mathcal{G}(B^{\varepsilon})+g(A^{\varepsilon}, B^{\varepsilon})]dt+\frac{1}{\sqrt{\varepsilon}}\sigma_{2}dW_{2}
\\A^{\varepsilon}(0,t)=0=A^{\varepsilon}(1,t)
\\B^{\varepsilon}(0,t)=0=B^{\varepsilon}(1,t)
\\A^{\varepsilon}(x,0)=A_{0}(x)
\\A^{\varepsilon}_{t}(x,0)=A_{1}(x)
\\B^{\varepsilon}(x,0)=B_{0}(x)
\end{array}
\right.
\end{array}
\begin{array}{lll}
{\rm{in}}~Q,\\
{\rm{in}}~Q,\\
{\rm{in}}~(0,T),\\
{\rm{in}}~(0,T),\\
{\rm{in}}~I,\\
{\rm{in}}~I,\\
{\rm{in}}~I.
\end{array}
\end{equation*}
\par
In this proof, we will take
\begin{equation*}
\begin{array}{l}
\begin{array}{llll}
\varepsilon=1
\end{array}
\end{array}
\end{equation*}
for the sake of simplicity. All the results
can be extended without difficulty to the general case.\
Thus, we consider the following system
\begin{equation*}
\begin{array}{l}
\left\{
\begin{array}{llll}
dA_{t}=[\mathcal{L}(A)+\rho_{R}(A)\mathcal{F}(A)+f(A, B)]dt+\sigma_{1}dW_{1}
\\dB=[\mathcal{L}(B)+\rho_{R}(B)\mathcal{G}(B)+g(A, B)]dt+\sigma_{2}dW_{2}
\\A(0,t)=0=A(1,t)
\\B(0,t)=0=B(1,t)
\\A(x,0)=A_{0}(x)
\\A_{t}(x,0)=A_{1}(x)
\\B(x,0)=B_{0}(x)
\end{array}
\right.
\end{array}
\begin{array}{lll}
{\rm{in}}~Q,\\
{\rm{in}}~Q,\\
{\rm{in}}~(0,T),\\
{\rm{in}}~(0,T),\\
{\rm{in}}~I,\\
{\rm{in}}~I,\\
{\rm{in}}~I.
\end{array}
\end{equation*}
\par
We define
\begin{eqnarray*}
\begin{array}{l}
\begin{array}{llll}
\Phi_{R}(A,B)
\\=
\left(
\begin{array}{c}\Phi_{R}^{1}(A,B)
\\\Phi_{R}^{2}(A,B)
\end{array}\right)

\\=\left(\begin{array}{c}G^{\prime}(t)A_{0}+G(t)A_{1}+\int_{0}^{t}G(t-s)[\rho_{R}(A)\mathcal{F}(A)+f(A, B)](s) ds+\int_{0}^{t}G(t-s)\sigma_{1}dB_{1}
\\S(t)B_{0}+\int_{0}^{t}S(t-s)[\rho_{R}(B)\mathcal{G}(B)+g(A, B)](s)ds+\int_{0}^{t}S(t-s)\sigma_{2}dB_{2}
\end{array}
\right).
\end{array}
\end{array}
\end{eqnarray*}
\par
$\bullet$
It is easy to see the operator $\Phi_{R}(A,B)$ maps $X_{p,T_{0}}$ into itself.
\par
$\bullet$ The estimates of
\begin{eqnarray*}
\begin{array}{l}
\begin{array}{llll}
\mathbb{E}\sup_{0\leq t\leq T_{0}}\|(\Phi_{R}^{1}(A_{1}, B_{1})-\Phi_{R}^{1}(A_{2}, B_{2}))(t)\|_{H^{1}(I)}^{p}
+\mathbb{E}\sup_{0\leq t\leq T_{0}}\|((\Phi_{R}^{1}(A_{1}, B_{1})-\Phi_{R}^{1}(A_{2}, B_{2}))(t))^{\prime}\|^{p},
\\
\mathbb{E}\sup_{0\leq t\leq T_{0}}\|(\Phi_{R}^{2}(A_{1}, B_{1})-\Phi_{R}^{2}(A_{2}, B_{2}))(t)\|_{H^{1}(I)}^{p}.
\end{array}
\end{array}
\end{eqnarray*}
\par
Indeed, due to \cite[P84]{Y1}, we have
\begin{eqnarray*}
\begin{array}{l}
\begin{array}{llll}
\|\rho_{R}(A_{1})|A_{1}|^{2\sigma}A_{1}-\rho_{R}(A_{2})|A_{2}|^{2\sigma}A_{2}\|\leq CR^{2\sigma}\|A_{1}-A_{2}\|_{H^{1}(I)}.
\end{array}
\end{array}
\end{eqnarray*}
It follows from Corollary \ref{C1} that
\begin{eqnarray}\label{16}
\begin{array}{l}
\begin{array}{llll}
\mathbb{E}\sup_{0\leq t\leq T_{0}}\|\int_{0}^{t}G(t-s)(\rho_{R}(A_{1})\mathcal{F}(A_{1})-\rho_{R}(A_{2})\mathcal{F}(A_{2}))(s)ds\|_{H^{1}(I)}^{p}
\\+\mathbb{E}\sup_{0\leq t\leq T}\|(\int_{0}^{t}G(t-s)(\rho_{R}(A_{1})\mathcal{F}(A_{1})-\rho_{R}(A_{2})\mathcal{F}(A_{2}))(s)ds)^{\prime}\|^{p}
\\\leq C T_{0}^{p-1}\mathbb{E}(\int_{0}^{T_{0}}\|(\rho_{R}(A_{1})\mathcal{F}(A_{1})-\rho_{R}(A_{2})\mathcal{F}(A_{2}))(s)\|^{p}ds)
\\\leq C T_{0}^{p-1}\mathbb{E}[\int_{0}^{T_{0}}(R^{2}\|(A_{1}-A_{2})(s)\|_{H^{1}(I)}+R^{4}\|(A_{1}-A_{2})(s)\|_{H^{1}(I)})^{p}ds]
\\\leq C T_{0}^{p-1}(R^{2p}+R^{4p})\mathbb{E}(\int_{0}^{T_{0}}\|(A_{1}-A_{2})(s)\|_{H^{1}(I)}^{p}ds)
\\\leq C T_{0}^{p}(R^{2p}+R^{4p})\mathbb{E}\sup_{0\leq t\leq T_{0}}\|(A_{1}-A_{2})(t)\|_{H^{1}(I)}^{p},
\end{array}
\end{array}
\end{eqnarray}

and
\begin{eqnarray}\label{18}
\begin{array}{l}
\begin{array}{llll}
\mathbb{E}\sup_{0\leq t\leq T_{0}}\|\int_{0}^{t}G(t-s)(f(A_{1}, B_{1})-f(A_{2}, B_{2}))(s)ds\|_{H^{1}(I)}^{p}
\\+\mathbb{E}\sup_{0\leq t\leq T}\|(\int_{0}^{t}G(t-s)(f(A_{1}, B_{1})-f(A_{2}, B_{2}))(s)ds)^{\prime}\|^{p}
\\\leq CT_{0}^{p-1}\mathbb{E}(\int_{0}^{T_{0}}\|(f(A_{1}, B_{1})-f(A_{2}, B_{2}))(s)\|^{p}ds)

\\\leq CT_{0}^{p-1}\mathbb{E}(\int_{0}^{T_{0}}(\|(A_{1}-A_{2})(s)\|^{p}+\|(B_{1}-B_{2})(s)\|^{p})ds)

\\\leq C T_{0}^{p}(\mathbb{E}\sup_{0\leq t\leq T_{0}}\|(A_{1}-A_{2})(t)\|^{p}+\mathbb{E}\sup_{0\leq t\leq T_{0}}\|(B_{1}-B_{2})(t)\|^{p}).
\end{array}
\end{array}
\end{eqnarray}
Finally, collecting the above estimates (\ref{16})-(\ref{18}), we get
\begin{eqnarray}\label{19}
\begin{array}{l}
\begin{array}{llll}
\mathbb{E}\sup_{0\leq t\leq T_{0}}\|(\Phi_{R}^{1}(A_{1}, B_{1})-\Phi_{R}^{1}(A_{2}, B_{2}))(t)\|_{H^{1}(I)}^{p}
\\~~~+\mathbb{E}\sup_{0\leq t\leq T_{0}}\|((\Phi_{R}^{1}(A_{1}, B_{1})-\Phi_{R}^{1}(A_{2}, B_{2}))(t))^{\prime}\|^{p}
\\\leq C[T_{0}^{p}(R^{2p}+R^{4p})+T_{0}^{p}]
(\mathbb{E}\sup_{0\leq t\leq T_{0}}\|(A_{1}-A_{2})(t)\|_{H^{1}(I)}^{p}+\mathbb{E}\sup_{0\leq t\leq T_{0}}\|(B_{1}-B_{2})(t)\|_{H^{1}(I)}^{p}).
\end{array}
\end{array}
\end{eqnarray}

\par
By taking $p=q=2,j=1$ in the third inequality of (\ref{13}), we have
\begin{eqnarray}\label{6}
\begin{array}{l}
\begin{array}{llll}
\mathbb{E}\sup_{0\leq t\leq T_{0}}\|\int_{0}^{t}S(t-s)(\rho_{R}(B_{1})\mathcal{G}(B_{1})-\rho_{R}(A_{2})\mathcal{G}(B_{2}))(s)ds\|_{H^{1}}^{p}
\\\leq C\mathbb{E}\sup_{0\leq t\leq T_{0}}(\int_{0}^{t}(t-s)^{-\frac{1}{2}}\|(\rho_{R}(B_{1})\mathcal{G}(B_{1})-\rho_{R}(B_{2})\mathcal{G}(B_{2}))(s)\|ds)^{p}
\\\leq C\mathbb{E}\sup_{0\leq t\leq T_{0}}(\int_{0}^{t}(t-s)^{-\frac{1}{2}}R^{2}\|(B_{1}-B_{2})(s)\|_{H^{1}}ds)^{p}
\\\leq C R^{2p}\sup_{0\leq t\leq T_{0}}(\int_{0}^{t}(t-s)^{-\frac{1}{2}}ds)^{p}\mathbb{E}\sup_{0\leq t\leq T_{0}}\|(B_{1}-B_{2})(t)\|_{H^{1}}^{p}
\\\leq C R^{2p}T_{0}^{\frac{p}{2}}\mathbb{E}\sup_{0\leq t\leq T_{0}}\|(B_{1}-B_{2})(t)\|_{H^{1}}^{p},
\end{array}
\end{array}
\end{eqnarray}
and
\begin{eqnarray}\label{20}
\begin{array}{l}
\begin{array}{llll}
\mathbb{E}\sup_{0\leq t\leq T_{0}}\|\int_{0}^{t}S(t-s)(g(A_{1}, B_{1})-g(A_{2}, B_{2}))(s)ds\|_{H^{1}(I)}^{p}
\\\leq \mathbb{E}\sup_{0\leq t\leq T_{0}}(\int_{0}^{t}\|S(t-s)(g(A_{1}, B_{1})-g(A_{2}, B_{2}))(s)\|_{H^{1}(I)}ds)^{p}
\\\leq C\mathbb{E}\sup_{0\leq t\leq T_{0}}(\int_{0}^{t}(t-s)^{-\frac{1}{2}}\|(g(A_{1}, B_{1})-g(A_{2}, B_{2}))(s)\|ds)^{p}
\\\leq C\mathbb{E}\sup_{0\leq t\leq T_{0}}(\int_{0}^{t}(t-s)^{-\frac{1}{2}}(\|(A_{1}-A_{2})(s)\|+\|(B_{1}-B_{2})(s)\|)ds)^{p}
\\\leq C \sup_{0\leq t\leq T_{0}}(\int_{0}^{t}(t-s)^{-\frac{1}{2}}ds)^{p}(\mathbb{E}\sup_{0\leq t\leq T_{0}}\|(A_{1}-A_{2})(t)\|^{p}+\mathbb{E}\sup_{0\leq t\leq T_{0}}\|(B_{1}-B_{2})(t)\|^{p})
\\\leq C T_{0}^{\frac{p}{2}}(\mathbb{E}\sup_{0\leq t\leq T_{0}}\|(A_{1}-A_{2})(t)\|^{p}+\mathbb{E}\sup_{0\leq t\leq T_{0}}\|(B_{1}-B_{2})(t)\|^{p}).
\end{array}
\end{array}
\end{eqnarray}
According to (\ref{6}) and (\ref{20}), we have
\begin{eqnarray*}
\begin{array}{l}
\begin{array}{llll}
\mathbb{E}\sup_{0\leq t\leq T_{0}}\|(\Phi_{R}^{2}(A_{1}, B_{1})-\Phi_{R}^{2}(A_{2}, B_{2}))(t)\|_{H^{1}(I)}^{p}
\\\leq C(R^{2p}T_{0}^{\frac{p}{2}}+T_{0}^{\frac{p}{2}})
(\mathbb{E}\sup_{0\leq t\leq T_{0}}\|(A_{1}-A_{2})(t)\|_{H^{1}(I)}^{p}+\mathbb{E}\sup_{0\leq t\leq T_{0}}\|(B_{1}-B_{2})(t)\|_{H^{1}(I)}^{p}),
\end{array}
\end{array}
\end{eqnarray*}
\par
It follows from (\ref{19}) and (\ref{20}) that
\begin{eqnarray*}
\begin{array}{l}
\begin{array}{llll}
\mathbb{E}\sup_{0\leq t\leq T_{0}}\|(\Phi_{R}^{1}(A_{1}, B_{1})-\Phi_{R}^{1}(A_{2}, B_{2}))(t)\|_{H^{1}(I)}^{p}
\\+\mathbb{E}\sup_{0\leq t\leq T_{0}}\|((\Phi_{R}^{1}(A_{1}, B_{1})-\Phi_{R}^{1}(A_{2}, B_{2}))(t))^{\prime}\|^{p}
\\+\mathbb{E}\sup_{0\leq t\leq T_{0}}\|(\Phi_{R}^{2}(A_{1}, B_{1})-\Phi_{R}^{2}(A_{2}, B_{2}))(t)\|_{H^{1}(I)}^{p}
\\\leq C(T_{0}^{p}(R^{2p}+R^{4p})+T_{0}^{p}+T_{0}^{\frac{p}{2}}R^{2p}+T_{0}^{\frac{p}{2}})
(\mathbb{E}\sup_{0\leq t\leq T_{0}}\|(A_{1}-A_{2})(t)\|_{H^{1}(I)}^{p}+\mathbb{E}\sup_{0\leq t\leq T_{0}}\|(B_{1}-B_{2})(t)\|_{H^{1}(I)}^{p}),
\end{array}
\end{array}
\end{eqnarray*}
namely, we have
\begin{eqnarray}\label{21}
\begin{array}{l}
\begin{array}{llll}
\|\Phi_{R}(A_{1}, B_{1})-\Phi_{R}(A_{2}, B_{2})\|_{X_{p,T_{0}}}
\\\leq C(T_{0}(R^{2}+R^{4})+T_{0}+T_{0}^{\frac{1}{2}}R^{2}+T_{0}^{\frac{1}{2}})\|(A_{1},B_{1})-(A_{2},B_{2})\|_{X_{p,T_{0}}}.
\end{array}
\end{array}
\end{eqnarray}
\par
$\bullet$
For a sufficiently small $T_{0},$ is $\Phi_{R}(A,B)$ a contraction mapping on $X_{p,T_{0}}.$
\par
Hence, by applying the Banach contraction principle, $\Phi_{R}(A,B)$ has a unique fixed point in $X_{p,T_{0}},$
which is the unique local solution to (\ref{1}) on the interval
$[0,T_{0}].$ Since $T_{0}$ does not depend on the initial value $(A_{0},B_{0}),$ this
solution may be extended to the whole interval $[0,T].$
\par
We denote by $(A_{R},B_{R})$ this unique mild solution and let
\begin{eqnarray*}
\begin{array}{l}
\begin{array}{llll}
\tau_{R}=\inf\{t\geq0:\|(A_{R},B_{R})\|_{X_{p,t}}\geq R\},
\end{array}
\end{array}
\end{eqnarray*}
with the usual convention that $\inf \emptyset=\infty.$
\par
Since $R_{1}\leq R_{2},$ $\tau_{{R}_{1}}\leq \tau_{{R}_{2}},$ we can put $\tau_{\infty}=\lim\limits_{R\rightarrow +\infty}\tau_{R}.$
We define a local solution to (\ref{1}) as follows
\begin{eqnarray*}
\begin{array}{l}
\begin{array}{llll}
A(t)=A_{R}(t),~\forall t\in [0,\tau_{R}],
\\B(t)=B_{R}(t),~\forall t\in [0,\tau_{R}].
\end{array}
\end{array}
\end{eqnarray*}
Indeed, for any $t\in [0,\tau_{R_{1}}\wedge\tau_{R_{2}}]$
\begin{eqnarray*}
\begin{array}{l}
\begin{array}{llll}
~~~~A_{R_{1}}(t)-A_{R_{2}}(t)
\\=\int_{0}^{t}G(t-s)(\rho_{R_{1}}(A_{R_{1}})\mathcal{F}(A_{R_{1}})-\rho_{R_{2}}(A_{R_{2}})\mathcal{F}(A_{R_{2}})
+f(A_{R_{1}}, B_{R_{1}})-f(A_{R_{2}}, B_{R_{2}})) (s) ds,
\\
~~~~B_{R_{1}}(t)-B_{R_{2}}(t)
\\=\int_{0}^{t}S(t-s)(\rho_{R_{1}}(B_{R_{1}})\mathcal{G}(B_{R_{1}})-\rho_{R_{2}}(B_{R_{2}})\mathcal{G}(B_{R_{2}})+g(A_{R_{1}}, B_{R_{1}})-g(A_{R_{2}}, B_{R_{2}})) (s) ds.
\end{array}
\end{array}
\end{eqnarray*}
Proceeding as in the proof of (\ref{21}), we can obtain
\begin{eqnarray*}
\begin{array}{l}
\begin{array}{llll}
\|(A_{R_{1}},B_{R_{1}})-(A_{R_{2}},B_{R_{2}})\|_{X_{p,t}}
\\\leq C(t)\|(A_{R_{1}},B_{R_{1}})-(A_{R_{2}},B_{R_{2}})\|_{X_{p,t}},
\end{array}
\end{array}
\end{eqnarray*}
where $C(t)$ is a monotonically increasing function and $C(0)=0.$
If we take $t$ sufficiently small, we can obtain
\begin{eqnarray*}
\begin{array}{l}
\begin{array}{llll}
A_{R_{1}}(t)=A_{R_{2}}(t),
\\B_{R_{1}}(t)=B_{R_{2}}(t).
\end{array}
\end{array}
\end{eqnarray*}
Repeating the same argument for the interval $[t,2t]$ and so on yields
\begin{eqnarray*}
\begin{array}{l}
\begin{array}{llll}
A_{R_{1}}(t)=B_{R_{2}}(t),
\\A_{R_{1}}(t)=B_{R_{2}}(t).
\end{array}
\end{array}
\end{eqnarray*}
 for the whole interval $[0,\tau].$
According to this, we can know the above definition of local solution to (\ref{1}) is well defined.
\par
If $\tau_{\infty}<+\infty,$ the definition of $(A,B)$ yields $\mathbb{P}-$a.s.
\begin{eqnarray*}
\begin{array}{l}
\begin{array}{llll}
\lim\limits_{t\rightarrow \tau_{\infty}}\|(A,B)\|_{X_{p,t}}=+\infty,
\end{array}
\end{array}
\end{eqnarray*}
which shows that $(A,B)$ is a unique local solution to (\ref{1}) on the
interval $[0,\tau_{\infty}).$
\par
This completes the proof of Lemma \ref{P5}.
\end{proof}

\subsubsection{Energy inequalities for the slow-fast system (\ref{1})}
\par
Now, we establish some energy inequalities for the slow-fast system (\ref{1}).
\begin{proposition}\label{P2}
Let $\xi=\inf\{\tau_{\infty},T\}.$ If $A_{0},B_{0}\in H^{1}_{0}(I),A_{1}\in L^{2}(I),$ for $\varepsilon\in(0,1),$ $(A^{\varepsilon},B^{\varepsilon})$ is the unique solution to (\ref{1}),  then there exists a constant $C$ such that the solutions $(A^{\varepsilon},B^{\varepsilon})$ satisfy
\begin{eqnarray*}
\begin{array}{l}
\begin{array}{llll}
\sup\limits_{\varepsilon\in (0,1)}\mathbb{E}\sup\limits_{0\leq t\leq \xi}(\|A^{\varepsilon}_{x}(t)\|^{2}+\|A^{\varepsilon}_{t}(t)\|^{2}+\|A^{\varepsilon}(t)\|_{L^{4}(I)}^{4}+\|A^{\varepsilon}(t)\|_{L^{6}(I)}^{6})\leq C,
\\
\sup\limits_{\varepsilon\in (0,1)}\sup\limits_{t\in[0,\xi]}\mathbb{E}\|A^{\varepsilon}(t)\|_{H^{1}(I)}^{2}\leq C,
\\
\mathbb{E}\sup\limits_{t\in[0,\xi]}\|B^{\varepsilon}(t)\|_{H^{1}(I)}^{2}\leq \frac{C}{\varepsilon},
\\
\sup\limits_{\varepsilon\in (0,1)}\mathbb{E}\int_{0}^{\xi}\|B^{\varepsilon}_{xx}\|^{2}dt
\leq C.
\end{array}
\end{array}
\end{eqnarray*}
where $C$ dependent of $T,A_{0},B_{0}$ but independent of $\varepsilon\in (0,1).$

\end{proposition}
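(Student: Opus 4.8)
The plan is to run an energy method on the interval $[0,\xi]$, localized by the stopping times $\tau_{R}$ from Lemma \ref{P5} so that every quantity below is a priori finite; since the resulting bounds will be uniform in $R$, one lets $R\to\infty$ and invokes monotone convergence at the end. First I would treat the slow (wave) component by introducing the natural Klein--Gordon energy $\mathcal{E}(t)=\tfrac12\|A^{\varepsilon}_{t}\|^{2}+\tfrac12\|A^{\varepsilon}_{x}\|^{2}+\tfrac14\|A^{\varepsilon}\|_{L^{4}(I)}^{4}+\tfrac16\|A^{\varepsilon}\|_{L^{6}(I)}^{6}$. Writing the first equation of (\ref{1}) as the first-order system $dA^{\varepsilon}=A^{\varepsilon}_{t}\,dt$, $dA^{\varepsilon}_{t}=[\mathcal{L}(A^{\varepsilon})+\mathcal{F}(A^{\varepsilon})+f(A^{\varepsilon},B^{\varepsilon})]\,dt+\sigma_{1}dW_{1}$ and applying the It\^o formula to $\mathcal{E}$, the key observation is that the $(A^{\varepsilon}_{t},\mathcal{L}A^{\varepsilon})$ contribution cancels against $\tfrac{d}{dt}\tfrac12\|A^{\varepsilon}_{x}\|^{2}$ and the $(A^{\varepsilon}_{t},\mathcal{F}(A^{\varepsilon}))$ contribution cancels against the time derivatives of the $L^{4},L^{6}$ potentials, since $\mathcal{F}$ is exactly the variational derivative of $\tfrac14\|\cdot\|_{L^{4}}^{4}+\tfrac16\|\cdot\|_{L^{6}}^{6}$. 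Hence only the coupling term, the stochastic integral and the It\^o correction survive: $d\mathcal{E}=(A^{\varepsilon}_{t},f(A^{\varepsilon},B^{\varepsilon}))\,dt+(A^{\varepsilon}_{t},\sigma_{1}dW_{1})+\tfrac12\|\sigma_{1}\|_{Q_{1}}^{2}\,dt$. The Lipschitz/sublinear bound on $f$ and Young's inequality give $(A^{\varepsilon}_{t},f)\le C(\mathcal{E}+\|B^{\varepsilon}\|^{2}+1)$, while after taking $\mathbb{E}\sup_{0\le s\le t}$ the martingale is controlled by Burkholder--Davis--Gundy and Young, absorbing a fraction of $\mathbb{E}\sup\|A^{\varepsilon}_{t}\|^{2}$ into the left-hand side.

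To close this I need an a priori bound on $\int_{0}^{t}\mathbb{E}\|B^{\varepsilon}(s)\|^{2}\,ds$, which comes from the dissipativity of the fast equation exactly as in Lemma \ref{L6}. Applying It\^o to $\tfrac12\|B^{\varepsilon}\|^{2}$, using $(B^{\varepsilon},\mathcal{G}(B^{\varepsilon}))\le0$ (Corollary \ref{L1}), the inequality $\|B^{\varepsilon}_{x}\|^{2}\ge\lambda\|B^{\varepsilon}\|^{2}$ of (H) and $\alpha=\lambda-L_{g}>0$ yields $\tfrac{d}{dt}\mathbb{E}\|B^{\varepsilon}\|^{2}\le-\tfrac{\alpha}{\varepsilon}\mathbb{E}\|B^{\varepsilon}\|^{2}+\tfrac{C}{\varepsilon}(\mathbb{E}\|A^{\varepsilon}\|^{2}+1)$. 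Lemma \ref{L7} then gives $\mathbb{E}\|B^{\varepsilon}(t)\|^{2}\le\|B_{0}\|^{2}+C(1+\sup_{s\le t}\mathbb{E}\|A^{\varepsilon}(s)\|^{2})$ uniformly in $\varepsilon$, because the two factors $\tfrac1\varepsilon$ balance. Since $\|A^{\varepsilon}\|^{2}\le C\|A^{\varepsilon}_{x}\|^{2}\le C\mathcal{E}$ (again by (H)), substituting back produces the closed inequality $\mathbb{E}\sup_{0\le s\le t}\mathcal{E}(s)\le C(1+\mathcal{E}(0))+C\int_{0}^{t}\mathbb{E}\sup_{0\le r\le s}\mathcal{E}(r)\,ds$, and Gronwall delivers the first two displayed estimates on $[0,\xi]$; in particular $\sup_{\varepsilon}\sup_{t\le\xi}\mathbb{E}\|A^{\varepsilon}\|_{H^{1}}^{2}\le C$ and therefore $\sup_{\varepsilon}\sup_{t\le\xi}\mathbb{E}\|B^{\varepsilon}\|^{2}\le C$.

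For the last two estimates I would apply It\^o to $\tfrac12\|B^{\varepsilon}_{x}\|^{2}=\tfrac12\|(-\mathcal{L})^{1/2}B^{\varepsilon}\|^{2}$. The drift produces $-\tfrac{1}{\varepsilon}\|B^{\varepsilon}_{xx}\|^{2}$ from $(-\mathcal{L}B^{\varepsilon},\mathcal{L}B^{\varepsilon})$, a nonpositive term $(-\mathcal{L}B^{\varepsilon},\mathcal{G}(B^{\varepsilon}))\le0$ by Lemma \ref{L5}, and a forcing $(-\mathcal{L}B^{\varepsilon},g)$ dominated by $\tfrac12\|B^{\varepsilon}_{xx}\|^{2}+C(\|A^{\varepsilon}\|^{2}+\|B^{\varepsilon}\|^{2}+1)$; the It\^o correction is $\tfrac{t}{2\varepsilon}\,\mathrm{Tr}_{H^{1}}(\sigma_{2}^{2}Q_{2})$, which is precisely the source of the factor $\tfrac1\varepsilon$. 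For the third estimate I take $\mathbb{E}\sup_{0\le s\le t}$, bound the $H^{1}$ martingale by Burkholder--Davis--Gundy (its bracket carries a $\tfrac1\varepsilon$ from the $\tfrac1{\sqrt\varepsilon}$ diffusion), and absorb a fraction of $\mathbb{E}\sup\|B^{\varepsilon}_{x}\|^{2}$; the remaining terms are $O(1/\varepsilon)$ by the bounds already proved, so $\mathbb{E}\sup_{t\le\xi}\|B^{\varepsilon}\|_{H^{1}}^{2}\le C/\varepsilon$. For the fourth estimate I instead take plain $\mathbb{E}$ (the martingale vanishes) and retain the dissipation: rearranging $\tfrac{2}{\varepsilon}\mathbb{E}\int_{0}^{t}\|B^{\varepsilon}_{xx}\|^{2}\,ds$ against the $\tfrac12$-fraction absorbed from the forcing gives $\tfrac1\varepsilon\mathbb{E}\int_{0}^{t}\|B^{\varepsilon}_{xx}\|^{2}\,ds\le\|B_{0,x}\|^{2}+\tfrac{C}{\varepsilon}$, and multiplying through by $\varepsilon$ yields the $\varepsilon$-independent bound $\mathbb{E}\int_{0}^{\xi}\|B^{\varepsilon}_{xx}\|^{2}\,dt\le C$.

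The main obstacle is the bookkeeping of the singular factor $1/\varepsilon$: one must check that in the slow and $L^{2}$ estimates the $1/\varepsilon$ in the forcing is exactly compensated by the $1/\varepsilon$ in the dissipation rate, so the bounds remain uniform in $\varepsilon$, whereas in the $H^{1}$ estimate it genuinely survives as $C/\varepsilon$ and yet cancels after multiplication by $\varepsilon$ in the $H^{2}$-in-time estimate. A secondary technical point is to justify the $\mathbb{E}\sup$ It\^o computations rigorously through the localizing stopping times $\tau_{R}$ and the absorption of the Burkholder--Davis--Gundy terms, together with the requirement that $\sigma_{2}W_{2}$ be regular enough that $\mathrm{Tr}_{H^{1}}(\sigma_{2}^{2}Q_{2})<\infty$.
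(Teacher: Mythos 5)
Your proposal is correct and follows essentially the same route as the paper: the exact cancellation of $(A^{\varepsilon}_{t},\mathcal{L}A^{\varepsilon}+\mathcal{F}(A^{\varepsilon}))$ against the Klein--Gordon energy, Burkholder--Davis--Gundy with Young absorption for the martingale, the dissipativity estimate $\frac{d}{dt}\mathbb{E}\|B^{\varepsilon}\|^{2}\leq-\frac{\alpha}{\varepsilon}\mathbb{E}\|B^{\varepsilon}\|^{2}+\frac{C}{\varepsilon}(\mathbb{E}\|A^{\varepsilon}\|^{2}+1)$ closed by Gronwall, and It\^{o} applied to $\|B^{\varepsilon}_{x}\|^{2}$ with Lemma \ref{L5} for the $H^{1}$ and $\int\|B^{\varepsilon}_{xx}\|^{2}$ bounds. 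The only (immaterial) difference is the direction of the Gronwall closure: you substitute the $B$-bound into the $A$-estimate and iterate on $\mathbb{E}\sup\mathcal{E}$, whereas the paper substitutes the $A$-bound into the $B$-estimate and iterates on $\mathbb{E}\|B^{\varepsilon}(t)\|^{2}$.
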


\begin{proof}
The proof of Proposition \ref{P2} is divided into
several steps.
\par
$\bullet$ The estimates of $\mathbb{E}\sup\limits_{0\leq t\leq \xi}(\|A^{\varepsilon}_{x}(t)\|^{2}+\|A^{\varepsilon}_{t}(t)\|^{2}+\|A^{\varepsilon}(t)\|_{L^{4}(I)}^{4}+\|A^{\varepsilon}(t)\|_{L^{6}(I)}^{6})$ and $\sup\limits_{0\leq t\leq \xi}\mathbb{E}\|B^{\varepsilon}(t)\|^{2}.$
\par
$\star$ Indeed, it follows from \cite[P137, Theorem 3.5]{C4} that
\begin{eqnarray*}
% [inline block 0: 47 envs, 30392 chars -> data_tex | \begin{array}{l} \begin{array}{llll}...]

\end{array}
\end{eqnarray*}
where $C$ dependent of $p,T,A_{0},B_{0}$ but independent of $\varepsilon\in (0,1).$
\end{proposition}

\begin{proof}
The proof of Proposition \ref{P1} is divided into
several steps.
It is also suffice to prove Proposition \ref{P1} holds when $p$ is large enough. Here, the method of the proof is inspired from \cite{D2,F1,F2,F3,F4}.
\par
$\bullet$ The estimates of $\mathbb{E}\sup\limits_{0\leq t\leq T}(\|A^{\varepsilon}_{x}(t)\|^{2p}+\|A^{\varepsilon}_{t}(t)\|^{2p}+\|A^{\varepsilon}(t)\|_{L^{4}(I)}^{4p}+\|A^{\varepsilon}(t)\|_{L^{6}(I)}^{6p})$ and $\sup\limits_{0\leq t\leq T}\mathbb{E}\|B^{\varepsilon}(t)\|^{2p}.$
\par
$\star$ Indeed, it follows from \cite[P137, Theorem 3.5]{C4} that
\begin{eqnarray*}
% [inline block 1: 96 envs, 62252 chars -> data_tex | \begin{array}{l} \begin{array}{llll}...]

\end{array}
\end{eqnarray*}

\par
This completes the proof of Theorem \ref{Th1}.
\par
~~
\par
~~
\par
\noindent \footnotesize {\bf Acknowledgements.} \par I sincerely
 thank Professor Yong Li for many useful suggestions and help.\par
 \par

\baselineskip 9pt \renewcommand{\baselinestretch}{1.08}
\par
\par

{\small
}
\end{document}